\newcommand{\MCM}{\mathcal{M}}
\newcommand{\MCV}{\mathcal{V}}
\newcommand{\mcA}{\mathcal{A}}
\newtheorem{theorem}{Theorem}[section]
\newtheorem{proposition}{Proposition}[section]
\newtheorem{lemma}{Lemma}[section]
\pgfplotsset{compat=1.13}
\journal{Computers and Mathematics with Applications}
\begin{document}

\begin{frontmatter}


\title{Optimal-order preconditioners for the Morse-Ingard equations\tnoteref{t1}}

\tnotetext[t1]{This work was supported by NSF 1620222.}
\author[bu]{Robert C. Kirby\corref{cor1}}
\ead{robert\_kirby@baylor.edu}

\author[bu]{Peter Coogan}
\ead{peter\_coogan@baylor.edu}

\cortext[cor1]{Corresponding author}
\address[bu]{Department of Mathematics, Baylor University, One Bear Place \#97328, Waco, TX 76798-7328, United States}


\begin{abstract}
  The Morse-Ingard equations of thermoacoustics~\cite{acoustics} are a
  system of coupled time-harmonic equations for the temperature and
  pressure of an excited gas.  They form a critical aspect of modeling
  trace gas sensors.  In this paper, we analyze a reformulation of the
  system that has a weaker coupling between the equations than the
  original form.  We give a G{\aa}rding-type inequality for the system
  that leads to optimal-order asymptotic finite element error
  estimates.  We also develop preconditioners for the coupled
  system.  These are derived by writing the system as a $2 \times 2$
  block system with pressure and temperature unknowns segregated into
  separate blocks and then using either the block diagonal or block lower
  triangular part of this matrix as a preconditioner.  Consequently,
  the preconditioner requires inverting smaller, Helmholtz-like
  systems individually for the pressure and temperature.  Rigorous
  eigenvalue bounds are given for the preconditioned system, and these
  are supported by numerical experiments.
\end{abstract}

\begin{keyword}
  Block preconditioners \sep
Finite element \sep
Multiphysics \sep
Thermoacoustics


  MSC[2010] 65N30 \sep 65F08

\end{keyword}

\end{frontmatter}


\section{Introduction}
\label{sec:intro}
Laser absorption spectroscopy is used for detecting trace amounts of
gases, with applications to diverse fields such as air quality
monitoring, disease diagnosis, and manufacturing~\cite{curl2010quantum,patimisco2014quartz,petersen2017quartz}.  One
particular approach is photoacoustic spectroscopy, in which a laser is
fired between the tines of a small quartz tuning fork.  With appropriate
configuration of the laser and geometry, even minute amounts of the
gas can generate acoustic and thermal waves.  The interactions of
these waves with the tuning fork induces an electric signal due to
pyroelectric and piezoelectric effects.

Accurate computational modeling of these sensors provides an
important first step in optimizing the design of sensors to maximize
sensitivity subject to manufacturing constraints.
Two variants of these sensors are the so-called QEPAS (quartz-enhanced
photoacoustic spectroscopy) and ROTADE (resonant optothermoacoustic
detection) models~\cite{kosterev2002quartz, kosterev2010resonant}.  In
QEPAS, the acoustic wave dominates the signal, while the thermal wave
is more important in ROTADE. In many experimental
configurations, both effects appear.

Earlier work on modeling this problem~\cite{petraThesis,
  QEPASdesign, ROTADEdesign} simplified the model to a single PDE.
This includes an empirically-determined damping term to account for
otherwise-neglected processes and only works in select parameter regimes
Moreover, the empirical terms contain parameters
that strongly depend on particular geometry and not just physical
material parameters.  This greatly complicates computationally optimizing over
geometry.  Wanting to bypass this constraint,
a finite element discretization of the coupled 
pressure-temperature system was first addressed in~\cite{pyHPC}, where the
difficulty of solving the linear system was noted.  Kirby and Brennan
gave a more rigorous treatment in~\cite{brennan2015finite}.  This included error
estimates and the introduction and analysis of block preconditioners.
Kaderli \emph{et al} derived an analytical solution for the coupled
system in idealized geometry in~\cite{kaderli2017analytic}.  Their technique
involves reformulating the system studied in~\cite{brennan2015finite} by an algebraic
simplification that eliminates the temperature Laplacian from the
pressure equation.  Recent work by Safin \emph{et al}~\cite{safin} made
several concrete advances.  For one, they coupled the Morse-Ingard equations for
atmospheric pressure and temperature to heat conduction of the
quartz tuning fork, although vibrational effects were still not considered.
A perfectly-matched layer (PML)~\cite{berenger1994perfectly} can also be
used to truncate the computational domain, and a Schwarz-type
preconditioner that separates out the PML region was used to
effectively reduce the cost of solving the linear system.
They also include some favorable comparisons between the computational
model and experimental data. 

In this paper, we study further aspects of this
pressure/temperature system.  We extend the analysis of~\cite{brennan2015finite} to the
reformulated system of~\cite{kaderli2017analytic}.  Although the new system is no longer
coercive, it does admit a G{\aa}rding inequality that leads to finite
element error estimates.  We also give a rigorous
treatment of the eigenvalue clustering for block preconditioners for this formulation.
Compared to those studied in~\cite{brennan2015finite}, we obtain mesh-independent
results that in practice give much lower iteration counts than for the
original system.  We have not addressed PML or
other more robust absorbing boundary conditions, but we do give
rigorous analysis of the pressure-temperature system.
While~\cite{safin} considers this pressure temperature model under
PML, the theoretical results there focus on a domain decomposition
technique to separate the boundary region from the interior for the
simpler Helmholtz model.

The rest of the paper is organized as follows.  In
Section~\ref{sec:MI}, we present the Morse-Ingard equations, their
reformulation, and the resulting finite element discretization.  After
recalling some finite element convergence theory for Helmholtz-type
operators in Section~\ref{sec:fea}, we also discuss the connection
between linear algebra and operators on the discrete spaces and give
finite element error estimates for the reformulated Morse-Ingard system.  A
description of our block preconditioners together with theoretical
investigation then follows in Section~\ref{sec:bp}, after which we
present numerical results in Section~\ref{sec:num}.

\section{The Morse-Ingard equations and finite element discretization}
\label{sec:MI}

The Morse-Ingard equations~\cite{acoustics} are posed for pressure \( P \) and
temperature \( T \) in a bounded domain \( \Omega \subset \mathbb{R}^d \) for
\( d = 2 \) or \( 3 \).  The pressure and temperature satisfy a wave
and heat equation respectively, although they are coupled via viscous
forces.  The equations are
\begin{equation}
\begin{split}
\tfrac{\partial}{\partial t}
\left( T - \tfrac{\gamma - 1}{\gamma \alpha} P\right)
- \ell_h c \Delta T & = S, \\
\gamma \left( \tfrac{\partial^2}{\partial t^2} - \ell_v
c \tfrac{\partial}{\partial t} \Delta\right)
\left( P - \alpha T \right) - c^2 \Delta P & = 0,
\end{split}
\end{equation}
where $\ell_v$ and $\ell_h$ are characteristic
lengths associated with the respective effects of fluid viscosity and thermal conduction, $c$ is sound speed, $\gamma$ is the ratio of the specific heat
of the gas at constant pressure to that at constant volume, $\alpha = \left(\tfrac{\partial P}{\partial T} \right)_v$ is the rate of
change of ambient pressure with respect to ambient temperature at
constant volume, and $\omega$ is the frequency of a forcing function
applied to the system.

This system models fairly general thermoacoustic waves propagating
in a fluid.  It is assumed that, absent the waves, the fluid
is at rest.  Hence, the derivativation begins by linearizing the
incompressible Navier-Stokes equations around zero and including
acoustic and thermal effects.  For a derivation and particular
physical assumptions, we refer the reader to~\cite{acoustics,kaderli2017analytic}.  We will also
assume a periodic forcing function $S$ to reduce to a time-harmonic
system of equations.  From an engineering perspective, this is not a
major restriction.  This model does not account for
interactions of the gas with solid boundaries, which can occur either through heat exchange or
fluid-structure interaction.  However, such models will still include the system we consider in this paper as a sub-problem.

Following~\cite{kaderli2017analytic}, we will
also introduce the parameters
\begin{equation}
  \MCM := \tfrac{\ell_h \omega}{c}, \ \ \ 
  \Lambda := \tfrac{\ell_v \omega}{c}.
 \label{eq:mcmlam}
\end{equation}
In Table~\ref{table:params}, we show representative parameters for the
QEPAS and ROTADE sensors, taken from~\cite{safin}.

\begin{table}
  \centering
  \begin{tabular}{|l|c|c|}\hline
    & QEPAS & ROTADE \\ \hline
    $\ell_v$ & $1.537\times 10^{-7} \text{m}$
    & $1.383\times 10^{-5}\text{m}$ \\ 
    $\ell_h$ & $1.0157 \times 10^{-7} \text{m}$ &
    $9.144 \times 10^{-6} \text{m}$ \\ 
    $\alpha$ & $204.656 \tfrac{\text{Pa}}{\text{K}}$ &
    $2.274 \tfrac{\text{Pa}}{\text{K}}$ \\ 
    $c$ & $348.7 \tfrac{\text{m}}{\text{s}}$ & $348.7
    \tfrac{\text{m}}{\text{s}}$ \\ 
    $\gamma$ & $\tfrac{7}{5}$ & $\tfrac{7}{5}$ \\ 
    $\omega$ & $2.061 \times 10^5 \tfrac{\text{rad}}{\text{s}}$ &
    $2.061 \times 10^5 \tfrac{\text{rad}}{\text{s}}$ \\
$\MCM$ & $6.003 \times 10^{-5}$ & $5.404 \times 10^{-3}$ \\
    $\Lambda$ & $9.084 \times 10^{-5}$ & $8.179\times 10^{-3}$ \\ \hline
  \end{tabular}
  \caption{Representative values for the physical parameters in the
    QEPAS and ROTADE regimes.  Note that $c$, $\gamma$, and $\omega$
    are the same in both cases.}
    \label{table:params}
\end{table}

In our applications, $\Omega$ is typically the exterior of a tuning
fork, truncated a sufficient distance away.  A simple mesh of a
two-dimensional domain is shown later in Figure~\ref{fig:tfmesh}.
In light of the physical
discussion, we partition the boundary into \(
\partial \Omega = \Gamma_a \cup \Gamma_w \).  Here,  \( \Gamma_a \)
denotes the ``air'' boundary at which we truncate the computational
domain from the outside world.  \( \Gamma_w \) denotes the ``wall''
boundary separating the air from the quartz tuning fork.  On \(
\Gamma_a \), we use the boundary conditions
\begin{equation}
  \label{eq:gammaabc}
  \begin{split}
  \tfrac{\partial T}{\partial n} & = 0, \\
  \tfrac{\partial P}{\partial n} - i\tfrac{\sqrt{\gamma}\omega}{c}P & =
  0.
  \end{split}
\end{equation}
The first of these boundary conditions corresponds to the assumption
that all heat dissipation occurs near the tuning fork, and the second
is the standard ``transmission'' boundary condition.  Although better
domain truncation may be obtained by means of PML~\cite{berenger1994perfectly} or a
nonlocal condition~\cite{johnson1980coupling}, solvers under such
conditions frequently require handling transmission boundary
conditions as an important sub-problem.
We hope to study more advanced boundary conditions for the Morse-Ingard equations in future work.

On the wall boundary \( \Gamma_w \), we pose the no-flux conditions
\begin{equation}
  \label{eq:gammawbc}
  \begin{split}
    \tfrac{\partial T}{\partial n} & = 0, \\
    \tfrac{\partial P}{\partial n} & = 0.
  \end{split}
\end{equation}
The first of these conditions corresponds to thermally insulating the
air from the tuning fork.  Coupling between the air
and tuning fork requires generalizing this condition and is considered
in~\cite{safin}.  The second
condition states that acoustic waves will reflect off of the tuning
fork.  Handling the interaction between acoustic waves and the
vibrations of the tuning fork requires more complicated physics and
will be the subject of future investigation. 

Because the forcing function \( S \) is time-harmonic with frequency
\( \omega \) in our applications, the linearity of the equations means
that we can consider the time-harmonic form of this system:
\begin{equation}
\label{eq:harm}
\begin{split}
-\ell_h c \Delta T -i \omega T + i\omega \tfrac{\gamma - 1}{\gamma \alpha}P & = S,
 \\
- i \gamma \ell_v c \omega \alpha \Delta T + \gamma \omega^2 \alpha T
- (c^2 - i \gamma \ell_v c \omega) \Delta P
-\gamma \omega^2 P
& = 0.
\end{split}
\end{equation}

In addition to the parameters introduced in~\eqref{eq:mcmlam}, we
follow~\cite{kaderli2017analytic} in defining the nondimensional variables
\begin{gather*}
    \mathbf{x}_* = \tfrac{\omega\mathbf{x}}{c}, 
    \nabla_*  = \tfrac{c}{w} \nabla, \\
    P_*  = P\left( \tfrac{c \mathbf{x}_*}{\omega} \right), \\
    T_*  = \alpha T\left( \tfrac{c \mathbf{x}_*}{\omega} \right), \\
    S_*  = -\tfrac{\alpha}{\omega} S\left( \tfrac{c \mathbf{x}_*}{\omega} \right),
  \label{eq:newvars}
\end{gather*}
and then dropping the stars, the system becomes
\begin{equation}
  \label{eq:dropstar}
  \begin{split}
    \MCM \Delta T + i T - i \tfrac{\gamma-1}{\gamma} P & = S, \\
    \Delta P + \gamma \left( 1 - i \Lambda \Delta \right) \left( P - T
    \right) & = 0.
  \end{split}
\end{equation}
With $\MCM$ small and other coefficients $\mathcal{O}(1)$, the first
equation amounts to a large but skew perturbation of the Laplacian for
temperature together with a large but 0-order coupling with pressure.
The second equation has an indefinite Helmholtz operator acting on
pressure, although the effective wave number is moderate.  However,
the temperature appears in this equation both with a moderate 0-order term as
well as with (small parameter) times the Laplacian.

This situation can be favorably altered
following~\cite{kaderli2017analytic}.  We can subtract \(
i\gamma\tfrac{\Lambda}{\MCM} \) times the first equation from the
second to eliminate the temperature Laplacian.  We also negate both
equations so that the negative rather than positive Laplacian appears.
\begin{equation}
  \label{eq:PDE}
  \begin{split}
  -\MCM \Delta T - i T + i \tfrac{\gamma-1}{\gamma} P & = -S, \\
   \gamma\left( 1 - \tfrac{\Lambda}{\MCM} \right) T
  -\left( 1 - i \gamma \Lambda \right) \Delta P
  - \left[ \gamma \left(1-\tfrac{\Lambda}{\MCM}\right)-
    \tfrac{\Lambda}{\MCM}\right]P
  & = i\gamma\tfrac{\Lambda}{\MCM}S.
  \end{split}
\end{equation}

We also need to reformulate the transmission boundary condition
on~\eqref{eq:gammaabc} to account for the nondimensionalization.  In
particular, using the substitution~\eqref{eq:newvars} gives that
\begin{equation}
  \tfrac{\partial P}{\partial n} - i \sqrt{\gamma} P = 0.
\end{equation}

This reformulation leaves only a single Laplacian in each equation, so
that the coupling is only through zero-order terms.   We observe that the second
equation now has an indefinite Helmholtz operator on pressure, but for
our parameters, the wave number $\kappa^2 \sim 2.23$.   The coupling
to temperature has a similarly moderate size.

In~\cite{kaderli2017analytic}, this reformulation
enabled an analytical solution in the case of cylindrical symmetry
with Gaussian forcing, but we give a
finite element analysis amenable to less-idealized configurations.
In particular, we find that this reformulated system also
leads to more efficient preconditioners than we considered for the
original system.

We proceed by setting further notation.  Let \( L^2(\Omega) \) denote
the standard space of square-integrable complex-valued functions over
\( \Omega \) and \( H^k(\Omega) \subset L^2(\Omega) \) the space
consisting of functions with square-integrable weak derivatives of
order up to and including \( k \geq 0 \).  We let \( \| \cdot \|_\MCV \)
denote the norm associated with any space \( \MCV \), omitting it when
\( \MCV =L^2(\Omega) \).  We frequently omit the argument so that
from $L^2 \equiv L^2(\Omega)$ and
$H^k \equiv H^k(\Omega)$. The $H^k$ seminorm $|\cdot|_{H^k}$ will consist of $L^2$ norms
of all partial derivatives of order exactly $k$ in the standard way.

The space \( L^2(\Omega) \) is equipped with the standard
inner product
\begin{equation}
  (f, g) = \int_\Omega f(x) \overline{g(x)} dx,
\end{equation}
and we also have the inner product over any portion of the boundary \(
\Gamma \subseteq \partial \Omega \)
\begin{equation}
  \langle f, g \rangle_{\Gamma} = \int_\Gamma f(s) \overline{g(s)} ds.
\end{equation}

Since we are dealing with a system of two PDE, we will also need norms on the
Cartesian product of spaces.  To that end, for any \( U = (u,v) \in H^s(\Omega) \times H^s(\Omega) \), we write
\begin{equation}
  \| U \|^2_{H^s} = \| (u, v) \|_{H^s}^2 = \| u \|^2_{H^s} + \| v \|^2_{H^s}.
\end{equation}

We arrive at a weak form of the system~\eqref{eq:PDE} subject to the given boundary conditions by choosing some test functions \( v, w \in H^1(\Omega) \), multiplying the first
equation by \( \overline{v} \) and the second by \( \overline{w} \)
and integrating each over \( \Omega \).  Applying the boundary conditions after integration by parts gives
\begin{equation}
  \label{eq:weakform}
  \begin{split}
    \MCM \left( \nabla T, \nabla v \right) - i \left( T, v \right)
    + i \tfrac{\gamma-1}{\gamma} \left( P, v \right) & =
    -\left(S, v \right), \\
   \gamma\left( 1 - \tfrac{\Lambda}{\MCM} \right) \left( T, w \right)
   + \left( 1 - i \gamma \Lambda \right)
   \left[ \left(\nabla P, \nabla w\right)
     - i \sqrt{\gamma} \langle P , w \rangle_{\Gamma_a} \right] \\
  - \left[ \gamma \left(1-\tfrac{\Lambda}{\MCM}\right)-
     \tfrac{\Lambda}{\MCM}\right]\left(P, w\right)
  & = i\gamma\tfrac{\Lambda}{\MCM}\left(S, w\right)
  \end{split}
\end{equation}
Equivalently, writing $U=(T,P) \in H^1\times H^1$ and adding the
equations together, we may write this as
\begin{equation}
  \label{eq:abstweakform}
  a(U, V) = F(V)
\end{equation}
for all \( V=(v, w) \in H^1 \times H^1 \), where
\begin{equation}
  \begin{split}
    a(U, V) = & \MCM \left( \nabla T, \nabla v \right) - i \left( T, v \right)
    + i \tfrac{\gamma-1}{\gamma} \left( P, v \right) \\
    & +\gamma\left( 1 - \tfrac{\Lambda}{\MCM} \right) \left( T, w \right)
    + \left( 1 - i \gamma \Lambda \right)
    \left[ \left(\nabla P, \nabla w\right)
      - i \sqrt{\gamma} \langle P , w \rangle_{\Gamma_a} \right] \\
    & - \left[ \gamma \left(1-\tfrac{\Lambda}{\MCM}\right)-
       \tfrac{\Lambda}{\MCM}\right]\left(P, w\right) \\
    F(V) = &  -(S, v) + i \gamma \tfrac{\Lambda}{\MCM} \left(S, w  \right).
  \end{split}
  \label{eq:aandF}
\end{equation}

We will assume that for any \( F \) in the dual of \( H^1(\Omega)
\times H^1(\Omega) \) that the variational problem and its adjoint each have
a unique solution.  For physical parameters of particular interest,
this has been proven~\cite{brennan2015finite}.  We also assume
regularity in that there exists a constant \( C_R \) such that if \( F \in L^2
\times L^2 \), then the corresponding solution \( U = (T, P) \) is in \( H^2 \times H^2 \) and
\begin{equation}
  \label{eq:regassumption}
  \| U \|_{H^2} \leq C_R \| F \|_{L^2}.
\end{equation}

We partition the domain $\Omega$ into conforming,
quasiuniform triangulations~\cite[Chapter~3]{BrennerScott} and let $\MCV_h$ consist of
continuous piecewise polynomials of some degree $k$, typically 1. 
For all finite element spaces we use, the standard approximation property
\begin{equation}
  \label{eq:approxassumption}
  \inf_{v \in V_h} \| u - v \|_{H^1} \leq C_A h |u|_{H^2}
\end{equation}
holds for any \( u \in H^1 \), as well as the inverse inequality
\begin{equation}
  \label{eq:inverse}
  \| u \|_{H^1} \leq \tfrac{C_I}{h} \| u \|_{L^2}, \ \ \ u \in \MCV_h.
\end{equation}
We also introduce $\MCV^2_h = \MCV_h \times \MCV_h$ consisting of
pairs of finite element functions.  We use the standard Galerkin
discretization of our system, seeking a
numerical solution by restricting the bilinear form to the
finite-dimensional subspace $\MCV^2_h \times \MCV^2_h$.  In this case,
we introduce the discrete approximation of finding $U_h \in \MCV^2_h$
such that
\begin{equation}
  \label{eq:discabstweakform}
  a(U_h, V_h) = F(V_h)
\end{equation}
for all $V_h \in \MCV^2_h$, where $a$ and $F$ are the same forms
in~\eqref{eq:aandF}.

\section{Finite element analysis}
\label{sec:fea}
\subsection{Helmholtz-type equations}
Before proceeding to the Morse-Ingard equations, we recall several
facts about finite element discretization of Helmholtz-type
equations.  Equations of the form $-\Delta u - \kappa^2 u = f$ are
elliptic but typically not coercive for even moderate $\kappa$.  Hence, the standard theory based
on the Lax-Milgram and C\'ea Lemmas must be extended.
Following~\cite{BrennerScott} and the references therein, we can
establish finite element solvability and error estimates for a general
bounded bilinear form $a(u,v)$ on $H^1 \times H^1$ provided several conditions
are met.  First, one posits that the underlying equation is uniquely
solvable (e.g. the Laplacian is not shifted by an eigenvalue) and has
a regularity estimate of the form $\| u \|_{H^2} \leq C_R \| f
\|_{L^2}$.
Second, the bilinear form is bounded, so that
\begin{equation}
a(u,v) \leq C_1 \| u \|_{H^1} \| v \|_{H^1}, \ \ \ u, v \in H^1.
\end{equation}

Third, the bilinear form must satisfy a G{\aa}rding-type inequality.
That is, there must exist some $\alpha > 0$ and $K$ such that the shifted bilinear form
is coercive, with
\begin{equation}
  \label{eq:garding}
a(u, u) + K \| u \|^2 \geq \alpha \| u \|_{H^1}^2.
\end{equation}

Note that if~\eqref{eq:garding} holds for $K\leq 0$, then the bilinear
form is in fact coercive.  Even if $K > 0$ is required, it is possible
to prove that the finite element approximation of $u_h \in V_h$ such that
\begin{equation}
a(u_h, v_h) = (f, v_h), \ \ \ v_h \in V_h
\end{equation}
is well-posed and satisfies optimal-order error estimates provided that the mesh is sufficiently fine.  In particular, one must have $h \leq h_0$ with
\begin{equation}
h_0 = \tfrac{\sqrt{\alpha}}{C_1C_AC_R \sqrt{2K}},
\end{equation}
but no restriction is required if~\eqref{eq:garding} holds for $K \leq
0$.  Supposing any required condition on $h_0$ holds,
$u_h$ uniquely exists and 
\begin{equation}
\| u - u_h \|_{H^1} \leq C \inf_{v \in \MCV_h} \| u - v \|_{H^1},
\end{equation}
where $C = \tfrac{2C_1}{\alpha}$.  One also obtains optimal-order $L^2$
estimates
\begin{equation}
  \| u - u_h \| \leq C_1 C_A C_R h \| u - u_h \|_{H^1}.
\end{equation}

We will build from this theory in two ways.  First, the abstraction carries over straightforwardly to systems and complex-valued problems.  In Section \ref{sec:femtheory}, we demonstrate a G{\aa}rding-type inequality for the Morse-Ingard bilinear form and thence state error estimates.
Second, the uniform solvability for $h \leq h_0$ means that one has a mesh-independent bound on the inverse operators restricted to finite element spaces.  
Our analysis of preconditioners in Subsection~\ref{subsec:bp} will
utilize this uniform solvability for Helmholtz-type problems to give
estimates for certain products of finite element matrices.

As in~\cite{kirby2010functional}, it is natural to think of a bilinear
form $a$ as 
encoding a discrete 
operator $\mcA_h: V_h \rightarrow V_h^\prime$ satisfying
$\langle \mcA_h u_h, v_h \rangle = \langle f , v_h \rangle$, where
$\langle \cdot , \cdot \rangle$ is the $H^1$ duality pairing.  The
boundedness of the bilinear form $a$ proves that $\mcA_h$ is a bounded
operator into the dual with norm uniformly bounded in $h$.  Moreover, the error
estimates also show that the norm can be used to show
that $\mcA_h$ has an inverse uniformly bounded in $h$ as well.  To
wit, let $f \in (L^2)^\prime$ and $u_h$ solve $\mcA_h u_h = f$.  Then
\begin{equation}
  \label{eq:unifinv}
\| \mcA_h^{-1} f \|_{H^1} = \| u_h \|_{H^1} 
\leq \| u \|_{H^1} + \| u - u_h \|_{H^1}
\leq C_R(1 + C C_A h) \| f \|, 
\end{equation}
and the constant is bounded above since $h \leq h_0$.

Let $\{\psi_{\imath}\}_{\imath=1}^{\dim \MCV_h}$ be a basis for
the finite element space $V_h$.  (Note: we use $\imath$ and $\jmath$
instead of $i$ and $j$ to prevent confusion with the complex unit).  Then, we define
\begin{equation}
  \label{eq:massandstiff}
  A_{\imath\jmath} = a(\psi_{\jmath}, \psi_{\imath}), \ \ \ M_{\imath\jmath} = (\psi_{\jmath}, \psi_{\imath})
\end{equation}
to be the stiffness matrix arising from Galerkin discretization and the mass 
matrix, respectively.   We can identify any $u, v \in
\MCV_h$ with their respective vectors of expansion coefficients $\mathrm{u},
\mathrm{v}$.

Recalling the discussion in~\cite{kirby2010functional}, we note that the mass matrix $M$ plays an important role connecting the $L^2$ inner product and norm on $\MCV_h$ to linear algebra.  The $L^2$ inner product on $\MCV_h$ is just realized as the $M$-inner product on $\mathbb{C}^n$:
\begin{equation}
  (u, v) = \mathrm{v}^* M \mathrm{u},
\end{equation}
where the order of the arguments gets the complex conjugate in the
right place.  Similarly, the $L^2$ norm is related to the $M$-induced
vector norm by
\begin{equation}
\| u \|^2 = \mathrm{u}^* M \mathrm{u} = \| \mathrm{u} \|_M^2.
\end{equation}

Suppose a matrix $\tilde{A}$ encodes a bounded linear map $\tilde{\mcA}_h$ from $\MCV_h$ to itself, with input and result expressed in the same basis $\{ \psi_{\imath} \}_{\imath=1}^{\dim \MCV_h}$.  We can relate the matrix norm induced by the $M$-norm to the canonical operator norm for bounded maps on $L^2$ by
\begin{equation}
  \label{eq:opnorm}
\| \tilde{A} \|_M = \sup_{\| \mathrm{u} \|_M = 1} \| \tilde{A} \mathrm{u} \|_M
= \sup_{\| u \| = 1} \| \tilde{\mcA}_h u \|
= \| \tilde{\mcA}_h \|.
\end{equation}

The mass matrix also encodes the $L^2$ Riesz map for functions in $\MCV_h$ and its $L^2$ dual.  Let $\tau: \MCV_h \rightarrow \MCV_h^\prime$ identify a member of $\MCV_h$ with a linear function by means of
\begin{equation}
  \langle \tau u , v \rangle = (u, v) = \mathrm{v}^*M\mathrm{u}.
\end{equation}
The Riesz Representation Theorem states that $\tau$ is an isometric isomorphism -- any $f \in \MCV_h^\prime$ has a unique $\tau^{-1} f \in \MCV_h$ so that for all $u \in \MCV_h$,
\begin{equation}
  (\tau^{-1} f, u) = \langle f, u \rangle.
\end{equation}
If $\mathrm{f}$ encodes coefficients of some $f \in \MCV_h^\prime$ in the basis dual to $\{\psi_{\imath}\}_{\imath=1}^{\dim V_h}$, then $M^{-1} \mathrm{f}$ gives the coefficients of $\tau^{-1} f$ relative to $\{\psi_{\imath}\}_{\imath=1}^{\dim V_h}$ .

Returning to the stiffness matrix, we have that
\begin{equation}
\mathrm{v}^* A \mathrm{u} = a(u,v) = \langle \mcA_h u, v \rangle.
\end{equation}
Since $A$ encodes $\mcA_h: \MCV_h \rightarrow \MCV_h^\prime$, $M^{-1} A$
encodes the operator $\tau^{-1} \mcA_h : \MCV_h \rightarrow \MCV_h$
and $A^{-1} M$ encodes its inverse $(\mcA_h)^{-1} \tau : \MCV_h
\rightarrow \MCV_h$. 

Equation~\eqref{eq:opnorm} provides a starting point to bound $M$ norms for the matrix $M^{-1} A$ and its inverse.  Since $M^{-1}A$ encodes $\tau^{-1} \mcA_h$, we have that
\begin{equation}
  \label{eq:minvam}
\| M^{-1} A \|_M = \| \tau^{-1} \mcA_h \| = \| \mcA_h \|.
\end{equation}

These are $L^2$-based norms on $\MCV_h$ and operators and not
$H^1$ norms.  Using~\eqref{eq:opnorm} together with the inverse estimate~\eqref{eq:inverse}, we have:
\begin{equation}
  \begin{split}
  \| \mcA_h \| = \sup_{\| u \| = \| v \| = 1} |a(u,v)|
  \leq \sup_{\| u \| = \| v \| = 1} C_1 \| u \|_{H^1} \| v \|_{H^1} 
  \leq C_1 C_I^2 h^{-2}
  \end{split}
\end{equation}

While the $L^2$ norm of the operator grows like $h^{-2}$, which is
expected for discretizing a second-order elliptic operator, the norm
of $\mcA_h^{-1}$ is in fact uniformly bounded in $h$ (provided that
$h \leq h_0$) from the finite element convergence theory.  Since the $L^2$
norm is always bounded above by the $H^1$ norm, we have a bound on
$\mcA_h^{-1}$ as a bounded operator on $V_h$ from~\eqref{eq:unifinv}
and hence 
\begin{equation}
  \label{eq:l2invnorm}
  \| A^{-1} M\|_M = \| \mcA_h^{-1} \| \leq C_R(1+CC_Ah_0).
\end{equation}

Since the spectral radius of a matrix is bounded by any natural norm,
we note that~\eqref{eq:minvam} gives an upper bound of
$\mathcal{O}(h^{-2})$ for the largest eigenvalues of $M^{-1}A$,
while~\eqref{eq:l2invnorm} gives an $\mathcal{O}(1)$ lower bound for
the eigenvalues of $A^{-1}M = (M^{-1} A)^{-1}$.

\subsection{The Morse-Ingard system}
\label{sec:femtheory}
Now, we apply this discussion in the context of the Morse-Ingard
system.  
The bilinear form in~\eqref{eq:abstweakform} is continuous on \( H^1 \):
\begin{equation}
  \begin{split}
    |a(U, V)| \leq &  M |\left( \nabla T, \nabla v \right)| + |\left( T, v \right)| \\
    &
    + \tfrac{\gamma-1}{\gamma} |\left( P, v \right)|
    +  \left|\gamma\left( 1 - \tfrac{\Lambda}{M} \right)\right| |\left( T, w \right)| \\
    & + \left( 1 - i \gamma \Lambda \right)
   \left[ \left(\nabla P, \nabla w\right)
     - i \sqrt{\gamma} \langle P , w \rangle_{\Gamma_a} \right] \\
  & - \left[ \gamma \left(1-\tfrac{\Lambda}{M}\right)-
    \tfrac{\Lambda}{M}\right]\left(P, w\right) \\
  \leq & C_c \| U \|_{H^1} \| V \|_{H^1}.
  \end{split}
\end{equation}

If we take $T \equiv 0$ and $P$ with vanishing trace on
$\Gamma_a$, the real part of $a(U, U)$ can
take either positive or negative sign, so $a$ is not coercive.
We can still prove a G{\aa}rding-type inequality for $a$.  Since the solvability
has been proven in~\cite{brennan2015finite}, at least for the
parameter regime of interest, a simple adaptation of the known
techniques for Helmholtz establish discrete solvability and error
estimates for the Morse-Ingard system.
Technically, the resulting theorems postulate a
sufficiently fine mesh, although we have not seen a practical impact
of this for the parameters of interest.  This is consistent with our
earlier discussion that the effective wave numbers are quite moderate.

Let \( U = (T, P) \) and let \( K \) be some real constant, which we
will choose later to ensure that the real part of \( a(U, U) + K \| U \|^2 \)
is bounded below by a constant multiple of \( \| U \|_{H^1}^2 \equiv
\| T \|^2_{H^1} + \| P \|^2_{H^1} \).
We have that
\begin{equation}
  \begin{split}
    a(U, U) = &
    M \| \nabla T \|^2 - i \| T \|^2
    + i \tfrac{\gamma-1}{\gamma} \left( P, T \right)
    + \gamma\left( 1 - \tfrac{\Lambda}{M} \right) \left( T, P \right)
    \\
    & 
    + \left( 1 - i \gamma \Lambda \right)
   \left[ \| \nabla P \|^2
     - i \sqrt{\gamma} \| P \|^2_{\Gamma_a} \right] \\
   &
  - \left[ \gamma \left(1-\tfrac{\Lambda}{M}\right)-
    \tfrac{\Lambda}{M}\right] \| P \|^2 \\
  = &
  M \| \nabla T \|^2 + \| \nabla P \|^2 \\
  & - \left[ \gamma \left(1-\tfrac{\Lambda}{M}\right)  -\tfrac{\Lambda}{M}\right] \| P \|^2
    - \gamma^{\tfrac{3}{2}}\Lambda \| P \|^2_{\Gamma_a} \\
    &  - i \| T \|^2
    - i \gamma \Lambda \| \nabla P \|^2
    - i \sqrt{\gamma} \| P \|^2_{\Gamma_a} \\
&  + i\tfrac{\gamma-1}{\gamma}(P, T)
+ \gamma\left( 1 - \tfrac{\Lambda}{M} \right) \left( T, P \right),
  \end{split}
\end{equation}
where we have separated terms that are purely real or imaginary from
those with indeterminate as to sign.

Considering just the real part, we have
\begin{equation}
  \begin{split}
    \Re\left[a(U, U)\right] = & M \| \nabla T \|^2 + \| \nabla P \|^2
  - \left[ \gamma \left(1-\tfrac{\Lambda}{M}\right)  -\tfrac{\Lambda}{M}\right] \| P \|^2
     \\
    & - \gamma^{\tfrac{3}{2}}\Lambda \| P \|^2_{\Gamma_a} + \Re\left[ i\tfrac{\gamma-1}{\gamma}(P, T)
+ \gamma\left( 1 - \tfrac{\Lambda}{M} \right) \left( T, P \right)\right]
  \end{split}
\end{equation}
By a trace theorem, there exists a constant \( C_{\Gamma_a} \) such that for any \( P \in H^1(\Omega) \),
\begin{equation}
  \| P \|^2_{\Gamma_a} \leq C_{\Gamma_a} \| \nabla P \| \| P \|,
\end{equation}
and so a weighted Young's inequality leads to 
\[
\| P \|^2_{\Gamma_a} \leq \tfrac{1}{2\gamma^{3/2}\Lambda} \|
\nabla P \|^2 + \tfrac{C^2_\Gamma\gamma^{3/2}\Lambda}{2} \| P \|^2.
\]
Since the real parts of the cross terms are bounded below by the
negative of their moduli, we have
\begin{equation}
  \begin{split}
    \Re\left[a(U, U)\right] \geq & M \| \nabla T \|^2 +
    \tfrac{1}{2} \| \nabla P \|^2 \\
    &
    - \left[
      \gamma \left(1-\tfrac{\Lambda}{M}\right)
      -\tfrac{\Lambda}{M}
      + \tfrac{C_{\Gamma_a}^2\gamma^3 \Lambda^2}{2}
      + \tfrac{\gamma-1}{2\gamma} + \tfrac{\gamma}{2}\left|1-\tfrac{\Lambda}{M}\right|
      \right]  \| P \|^2 \\
    & - \left[ \tfrac{\gamma-1}{2\gamma} + \tfrac{\gamma}{2} \left| 1 -
    \tfrac{\Lambda}{M} \right| \right] \| T \|^2.
    \end{split}
\end{equation}

This gives the following G{\aa}rding-type inequality for our bilinear form:
\begin{proposition}
  For any \( K \) with
  \begin{equation}
    K \geq \max\{ \gamma \left(1-\tfrac{\Lambda}{M}\right)
      -\tfrac{\Lambda}{M}
      + \tfrac{C_{\Gamma_a}^2\gamma^3 \Lambda^2}{2}
      + \tfrac{\gamma-1}{2\gamma} +
      \tfrac{\gamma}{2}\left|1-\tfrac{\Lambda}{M}\right|,
      \tfrac{\gamma-1}{2\gamma} + \tfrac{\gamma}{2} \left| 1 -
      \tfrac{\Lambda}{M} \right|\},
  \end{equation}
  there exists an \( \alpha \geq 0 \) such that
  \begin{equation}
    \Re\left[a(U, U)\right] + K \| U \|^2
    \geq \alpha \| U \|_{H^1}^2.
  \end{equation}
\end{proposition}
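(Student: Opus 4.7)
The proof is essentially a matter of reading off the inequality already assembled in the paragraphs just before the proposition, so my plan is to organize that computation into a clean sequence of three bounds.

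First I would expand $a(U,U)$ by substituting $V = U = (T,P)$ into the bilinear form in~\eqref{eq:aandF}, splitting the result into its real and imaginary parts. The diffusion terms $\MCM\|\nabla T\|^2$ and $\|\nabla P\|^2$ are real and positive; the term $-[\gamma(1-\tfrac{\Lambda}{\MCM})-\tfrac{\Lambda}{\MCM}]\|P\|^2$ is real but of indeterminate sign; the boundary contribution $-\gamma^{3/2}\Lambda\|P\|^2_{\Gamma_a}$ arising from $(1-i\gamma\Lambda)(-i\sqrt{\gamma})$ is real and negative; and the cross terms $i\tfrac{\gamma-1}{\gamma}(P,T)$ and $\gamma(1-\tfrac{\Lambda}{\MCM})(T,P)$ have indeterminate real parts. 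Everything else is purely imaginary and drops out of $\Re[a(U,U)]$.

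Next I would control the two problematic pieces. For the cross terms I would use $\Re(z)\geq -|z|$ followed by Cauchy--Schwarz and the weighted Young inequality $|(T,P)|\leq \tfrac{1}{2}\|T\|^2+\tfrac{1}{2}\|P\|^2$, producing contributions of the form $-\bigl[\tfrac{\gamma-1}{2\gamma}+\tfrac{\gamma}{2}|1-\tfrac{\Lambda}{\MCM}|\bigr]\bigl(\|T\|^2+\|P\|^2\bigr)$. For the boundary term I would invoke the trace inequality $\|P\|^2_{\Gamma_a}\leq C_{\Gamma_a}\|\nabla P\|\|P\|$ stated in the text, together with the weighted Young inequality
\[
\gamma^{3/2}\Lambda\,\|P\|_{\Gamma_a}^2 \leq \tfrac{1}{2}\|\nabla P\|^2 + \tfrac{C_{\Gamma_a}^2\gamma^3\Lambda^2}{2}\|P\|^2,
\]
chosen precisely so that half of the $\|\nabla P\|^2$ coming from the diffusion survives. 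Substituting these two bounds yields exactly the displayed lower bound
\[
\Re[a(U,U)] \geq \MCM\|\nabla T\|^2 + \tfrac{1}{2}\|\nabla P\|^2 - K_T\|T\|^2 - K_P\|P\|^2,
\]
where $K_T$ and $K_P$ are the two entries inside the $\max$ in the hypothesis on $K$.

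Finally, given any $K$ satisfying the hypothesis, I would add $K\|U\|^2 = K\|T\|^2 + K\|P\|^2$ to both sides to obtain
\[
\Re[a(U,U)] + K\|U\|^2 \geq \MCM\|\nabla T\|^2 + \tfrac{1}{2}\|\nabla P\|^2 + (K-K_T)\|T\|^2 + (K-K_P)\|P\|^2,
\]
and then set $\alpha = \min\bigl\{\MCM,\tfrac{1}{2},K-K_T,K-K_P\bigr\}\geq 0$, which immediately gives $\alpha\|U\|_{H^1}^2$ on the right. The only real obstacle, and it is a minor one, is the bookkeeping needed to pair each negative contribution with exactly the right splitting constant so that a fixed positive fraction of each gradient term remains; choosing the weight $1/(2\gamma^{3/2}\Lambda)$ in Young's inequality for the boundary term is the critical choice, since any larger weight would consume all of $\|\nabla P\|^2$ and leave nothing positive on the pressure gradient side.
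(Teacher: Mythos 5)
Your proposal is correct and follows essentially the same route as the paper: expand $\Re[a(U,U)]$, absorb the boundary term via the trace inequality and a weighted Young inequality tuned to leave $\tfrac12\|\nabla P\|^2$, bound the cross terms below by their moduli and Young's inequality, and then take $\alpha$ as the minimum of the surviving coefficients. The bookkeeping, including the critical weight $1/(2\gamma^{3/2}\Lambda)$, matches the paper's computation exactly.
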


At this point, the treatment in~\cite{BrennerScott} for general elliptic
problems satisfying such an inequality goes through essentially
unchanged, with straightforward modifications accounting for the
complex-valued nature of the system.

\begin{theorem}
  Suppose the solution to~\eqref{eq:abstweakform} satisfies $\| U
  \|_{H^2} \leq C_R \| F \|$.  Then there exist constants \( h_0 \) and \( C \)
  such that for all meshes with \( h \leq h_0 \), there exists a
  unique Galerkin solution to~\eqref{eq:discabstweakform} such that
  \begin{equation}
    \| U - U_h \|_{H^1} \leq C \inf_{V \in V_H} \| U - V \|_{H^1}
  \end{equation}
  and also
  \begin{equation}
    \| U - U_h \|_{L^2} \leq C_1 C_A C_R h \| U - U_h \|_{H^1}
  \end{equation}
\end{theorem}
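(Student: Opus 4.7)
The plan is to follow the classical Schatz--Aubin--Nitsche argument for Helmholtz-type problems: start from the G{\aa}rding inequality just established, use Galerkin orthogonality to reduce the $H^1$ error to an interpolation error plus an $L^2$ error term, then dispose of the $L^2$ term by duality against the adjoint problem whose regularity we have assumed. The mesh condition $h \le h_0$ is precisely what is needed to absorb the leftover $L^2$ contribution back into the $H^1$ estimate.

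In more detail, first I would establish existence and uniqueness of $U_h$. The discrete problem~\eqref{eq:discabstweakform} is a square linear system, so it suffices to show that $a(U_h,V_h)=0$ for all $V_h \in \MCV_h^2$ forces $U_h=0$; this will drop out of the error bound below applied with $U=0$. Next, for the $H^1$ estimate, let $V_h \in \MCV_h^2$ be arbitrary, set $E = U - U_h$, and split $E = (U - V_h) + (V_h - U_h)$. The G{\aa}rding inequality and Galerkin orthogonality $a(E, W_h) = 0$ for all $W_h \in \MCV_h^2$ give
\begin{equation*}
\alpha \| V_h - U_h \|_{H^1}^2 \le \Re\bigl[a(V_h - U_h, V_h - U_h)\bigr] + K \| V_h - U_h \|^2
= \Re\bigl[a(V_h - U, V_h - U_h)\bigr] + K \| V_h - U_h \|^2,
\end{equation*}
and, after using boundedness of $a$, a triangle inequality, and Young's inequality to absorb, I arrive at an inequality of the shape
\begin{equation*}
\| E \|_{H^1}^2 \le C \| U - V_h \|_{H^1}^2 + C' \| E \|^2 .
\end{equation*}

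The heart of the matter, and what I expect to be the main obstacle, is estimating $\| E \|$ by duality so that the $\| E \|^2$ term can be absorbed. I would introduce the adjoint problem: find $\Phi \in H^1 \times H^1$ such that $a(W, \Phi) = (W, E)$ for all $W \in H^1 \times H^1$. By the assumed solvability of the adjoint and the regularity estimate~\eqref{eq:regassumption}, $\Phi \in H^2 \times H^2$ with $\| \Phi \|_{H^2} \le C_R \| E \|$. Testing with $W = E$, using Galerkin orthogonality to subtract any $\Phi_h \in \MCV_h^2$, and applying boundedness of $a$ together with the approximation property~\eqref{eq:approxassumption} applied componentwise to $\Phi$ yields
\begin{equation*}
\| E \|^2 = a(E, \Phi) = a(E, \Phi - \Phi_h) \le C_c \| E \|_{H^1} \, C_A h \, | \Phi |_{H^2}
\le C_c C_A C_R h \, \| E \|_{H^1} \| E \|,
\end{equation*}
so $\| E \| \le C_c C_A C_R h \| E \|_{H^1}$. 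Substituting back and requiring $h \le h_0$ small enough that $C' (C_c C_A C_R h)^2 \le \tfrac{1}{2}$ lets me absorb and conclude $\| E \|_{H^1} \le C \| U - V_h \|_{H^1}$; taking the infimum over $V_h$ gives the first estimate. The $L^2$ bound $\| E \| \le C_1 C_A C_R h \| E \|_{H^1}$ is exactly the duality inequality just derived (with the constants identified as in the statement), so no further work is required. Finally, uniqueness of $U_h$ follows by applying the argument with $F \equiv 0$, which forces $U \equiv 0$ and hence $U_h \equiv 0$, and the explicit value of $h_0$ is the one singled out after the G{\aa}rding inequality in Section~\ref{sec:fea}.
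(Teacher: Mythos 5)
Your proposal is correct and follows exactly the route the paper intends: the paper itself simply defers to the Schatz/G{\aa}rding argument in Brenner--Scott, and your combination of the G{\aa}rding inequality, Galerkin orthogonality, and Aubin--Nitsche duality against the adjoint problem (with $h \le h_0$ used to absorb the $L^2$ term) is that standard argument, correctly adapted to the complex-valued system. No gaps; the identification of the $L^2$ bound with the duality inequality and of $h_0$ with the threshold stated after the G{\aa}rding discussion matches the paper.
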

This gives a best approximation result in $H^1$ with an extra power of
$h$ in $L^2$.  More precise powers of $h$ can be obtained in terms of postulated regularity.
Also, we note that this theory gives a uniform bound on the discrete
solution operator exactly analogous to~\eqref{eq:unifinv}.

\section{Some block preconditioners and their analysis}
\label{sec:bp}
Having established this convergence theory, we turn to the efficient
solution of the linear systems required to compute the Galerkin approximation.
In particular, we propose and analyze block Jacobi and block
Gauss-Seidel type preconditioners.  Although the formalism is
well-understood for other problems~\cite{mardal2007order,mardal2011preconditioning,wathen1993fast}, and very general implementations are
possible~\cite{Brown:2012}, rigorous analysis requires utilizing the
properties of the particular problem.  

\subsection{Block structure}

The linear system for~\eqref{eq:discabstweakform} naturally leads to a logical block
structure
\begin{equation}
  \label{eq:blockmat}
  \begin{bmatrix} H_T & M_1 \\ M_2 & H_P \end{bmatrix}
  \begin{bmatrix} T \\ P \end{bmatrix}
  =
  \begin{bmatrix} F_1 \\ F_2 \end{bmatrix}.
\end{equation}

We let $M$ denote the mass matrix on
$\MCV_h$ as in~\eqref{eq:massandstiff}.  We also introduce the standard stiffness matrix
\(
K = (\nabla \psi_{\jmath}, \nabla \psi_{\imath}),
\)
which is Hermitian semi-definite.  We
also let $K^\gamma$ denote the stiffness matrix with the transmission
boundary term incorporated:
\[
K^\gamma_{\imath\jmath} = (\nabla \psi_{\jmath}, \nabla \psi_{\imath}) - i\sqrt{\gamma}\langle
\psi_{\jmath} , \psi_{\imath} \rangle_{\Gamma_a}
\]

Comparing to~\eqref{eq:weakform}, we see that $H_T$ is the matrix
arising from the bilinear form
\begin{equation}
\MCM \left( \nabla T, \nabla v \right) - i \left( T, v \right),
\end{equation}
so that
\begin{equation*}
  H_T = \MCM K - i M.
\end{equation*}
We call the associated operator $\mcA_{T,h}:\MCV_h \rightarrow \MCV_h^\prime$. 
The bilinear form for $H_T$ has a semidefinite real part and satisfies
a G{\aa}rding inequality for any $K > 0$.  In fact, a compactness
argument can be used to establish coercivity.  It is uniformly
solvable for all $h$, and the inverse of $\mcA_{T,h}$ is uniformly
bounded (in both $H^1$ and $L^2$ norms) with
\begin{equation}
  \label{eq:HTinv}
  \| \mcA^{-1}_{T,h} \| \leq C_T.
\end{equation}

The matrix $H_P$ arises from the bilinear form
\[
\left( 1 - i \gamma \Lambda \right)
\left[ \left(\nabla P, \nabla w\right)
  - i \sqrt{\gamma} \langle P , w \rangle_{\Gamma_a} \right]
- \left[ \gamma \left(1-\tfrac{\Lambda}{\MCM}\right)-
  \tfrac{\Lambda}{\MCM}\right]\left(P, w\right),
\]
and we can write the matrix $H_P$ as
\[
H_P = 
(1-i \gamma \Lambda) K^\gamma - \left[ \gamma(1-\tfrac{\Lambda}{\MCM})
  - \tfrac{\Lambda}{\MCM} \right] M
\]

The invertibility of the underlying differential operator can be
established by standard means, and discrete solvability and error
estimates via a G{\aa}rding inequality techniques as
in~\cite{BrennerScott} as for the overall system.  Identifying the underlying operator as $\mcA_{P,h}:\MCV_h \rightarrow \MCV_h^\prime$, we have a uniform bound
\begin{equation}
  \label{eq:HPinv}
  \| \mcA^{-1}_{P,h} \| \leq C_P,
\end{equation}
at least for $h \leq h_0$.

The off-diagonal blocks $M_{\imath}$ in~\eqref{eq:blockmat} are both scaled mass matrices, with
\begin{equation}
  \label{eq:Mi}
M_1 = i \tfrac{\gamma-1}{\gamma} M, \ \ \
M_2 = \gamma\left(1-\tfrac{\Lambda}{\MCM} \right) M.
\end{equation}

\subsection{Block preconditioners}
\label{subsec:bp}
As the matrices in~\eqref{eq:blockmat} are large, sparse, 
and ill-conditioned, their solution at scale will require effective
preconditioners.  
We consider two families of block preconditioners in this analysis.
A block Jacobi preconditioner simply consists of the diagonal blocks, with
\begin{equation}
  \label{eq:PJ}
  \Pi_{J} = \begin{bmatrix} H_T & 0 \\ 0 & H_P \end{bmatrix}.
\end{equation}
The block Gauss-Seidel preconditioner consists of the lower triangle
of blocks, with
\begin{equation}
  \label{eq:GS}
  \Pi_{GS} = \begin{bmatrix} H_T & 0 \\ M_2 & H_P \end{bmatrix}.
\end{equation}

At each iteration of a Krylov method using one of these
preconditioners, we must invert the $\Pi$ onto the current residual.
Doing so requires 
inverting both of the diagonal blocks $H_T$ and $H_P$.  Inverting $\Pi_{GS}$ also requires a
matrix-vector product with $M_2$ and some vector arithmetic, although
these operations are typically much cheaper than solving linear
systems with the diagonal blocks.

The block matrices $\Pi_J$ and $\Pi_{GS}$ arise from dropping terms
from the bilinear form $a$ in~\eqref{eq:aandF}.  That is, again with
$U=(T,P)$ and $V=(v,w)$, we define
\begin{equation}
  \begin{split}
  a_J(U, V) = & \MCM \left( \nabla T, \nabla v \right) - i \left( T, v
  \right) \\
    &  + \left( 1 - i \gamma \Lambda \right)
    \left[ \left(\nabla P, \nabla w\right)
      - i \sqrt{\gamma} \langle P , w \rangle_{\Gamma_a} \right] \\
    & - \left[ \gamma \left(1-\tfrac{\Lambda}{\MCM}\right)-
      \tfrac{\Lambda}{\MCM}\right]\left(P, w\right)
  \end{split}
  \label{eq:aJ}
\end{equation}
\begin{equation}
  \begin{split}
    a_{GS}(U, V) = & \MCM \left( \nabla T, \nabla v \right) - i \left(
    T, v \right) \\
    & +\gamma\left( 1 - \tfrac{\Lambda}{\MCM} \right) \left( T, w
    \right) \\
    & 
    + \left( 1 - i \gamma \Lambda \right)
    \left[ \left(\nabla P, \nabla w\right)
      - i \sqrt{\gamma} \langle P , w \rangle_{\Gamma_a} \right] \\
    & - \left[ \gamma \left(1-\tfrac{\Lambda}{\MCM}\right)-
       \tfrac{\Lambda}{\MCM}\right]\left(P, w\right) \\
  \end{split}
  \label{eq:aGS}
\end{equation}
The matrices $\Pi_J$ and $\Pi_{GS}$ are obtained by applying each of
these bilinear forms to pairs of finite element basis functions in the
natural way.  Following a similar discussion as the bilinear form $a$,
it is not hard to see that both of these forms are continuous in
$(H^1)^2$ and satisfy a G{\aa}rding-type inequality with constant $K$
no worse than that for the full Morse-Ingard system~\eqref{eq:aandF}.
Consequently, the inverse operators for these variational problems
will have mesh-independent bounds (for $h \leq h_0$) of their inverses.

For each preconditioner $\Pi=\Pi_J, \Pi_{GS}$, we wish to assess the eigenvalues of
$\Pi^{-1} A$, where $A$ is the matrix in~\eqref{eq:blockmat}.
Equivalently, we can study the generalized eigenvalues of $A$ with
respect to $\Pi$. If these eigenvalues exhibit favorable properties,
such as mesh-independent clustering away from 0, then we can hope for
scalable solution of the system by means of a Krylov method.

The generalized eigenvalue problem for the block-Jacobi preconditioner is 
\begin{align}
  \label{eq:bjpgep}
\begin{bmatrix}
H_T & M_1 \\
M_2 & H_P
\end{bmatrix} 
\begin{bmatrix}
T\\
P
\end{bmatrix} &= \lambda \begin{bmatrix} H_T& 0\\
0 & H_P
\end{bmatrix}
\begin{bmatrix}
T\\
P
\end{bmatrix}.
\end{align}

Writing this out as a system of equations gives

\begin{align*} H_TT + M_1 P &= \lambda H_T T\\
M_2T + H_P P &= \lambda H_P P.
\end{align*}

First, we can rule out the possibility of an eigenvalue $\lambda =1$ because $M_i$ are both nonsingular.  With $\lambda \neq 1$, we can eliminate $P$ from the second equation and insert into the first to find that

%
%
%

\begin{align*} 
H_T^{-1}M_1 H_P^{-1}M_2T   &=( \lambda -1)^2 T,
\end{align*}
from which we have:
\begin{proposition}
The generalized eigenvalues of~\eqref{eq:bjpgep} satisfy
$(\lambda-1)^2 = \mu$ for any eigenvalue $\mu$ of $H_T^{-1}M_1
H_P^{-1}M_2$, and we have $\lambda = 1 \pm \sqrt{\mu}$.
\end{proposition}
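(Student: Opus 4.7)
The plan is to carry out exactly the linear-algebraic manipulation that the excerpt has already sketched, but with the steps and non-degeneracy conditions made explicit. I would start from the generalized eigenvalue problem~\eqref{eq:bjpgep} and write it out as the pair of block equations
\begin{equation*}
H_T T + M_1 P = \lambda H_T T, \qquad M_2 T + H_P P = \lambda H_P P,
\end{equation*}
which rearrange to $M_1 P = (\lambda-1) H_T T$ and $M_2 T = (\lambda-1) H_P P$.

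Next I would dispose of the degenerate case $\lambda = 1$. Using the explicit forms $M_1 = i\tfrac{\gamma-1}{\gamma}M$ and $M_2 = \gamma(1-\tfrac{\Lambda}{\MCM})M$ from~\eqref{eq:Mi}, both $M_i$ are nonzero scalar multiples of the mass matrix $M$ (for the physical parameters of interest, where $\gamma \neq 1$ and $\Lambda \neq \MCM$), and $M$ is SPD. If $\lambda = 1$, the two rearranged equations force $M_1 P = 0$ and $M_2 T = 0$, which in turn give $P = 0$ and $T = 0$, contradicting the requirement that $(T,P)$ be a nontrivial eigenvector. Hence any generalized eigenvalue satisfies $\lambda \neq 1$.

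For $\lambda \neq 1$, I would invert the scalar factor and use the invertibility of $H_P$ (guaranteed by~\eqref{eq:HPinv}) to solve the second equation as $P = (\lambda-1)^{-1} H_P^{-1} M_2 T$. Substituting into the first equation and cancelling the $H_T T$ on both sides leaves
\begin{equation*}
M_1 H_P^{-1} M_2 T = (\lambda - 1)^2 H_T T,
\end{equation*}
and multiplying by $H_T^{-1}$ (invertible by~\eqref{eq:HTinv}) yields $H_T^{-1} M_1 H_P^{-1} M_2 T = (\lambda-1)^2 T$. Thus $T$ must be nonzero (else $P$ would be zero too), and setting $\mu = (\lambda-1)^2$ identifies $T$ as an eigenvector of $H_T^{-1} M_1 H_P^{-1} M_2$ with eigenvalue $\mu$, giving $\lambda = 1 \pm \sqrt{\mu}$.

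There is essentially no hard step here; the only mild subtlety is ensuring that the substitution is legitimate, which reduces to verifying $\lambda \neq 1$ and the nonsingularity of $H_P$ (and $H_T$ for the final cleanup). Both follow from the structural facts already established earlier in the excerpt, so the proof is a short computation rather than an analytic argument.
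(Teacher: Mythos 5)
Your proposal is correct and follows exactly the same route as the paper: write out the two block equations, rule out $\lambda=1$ using the nonsingularity of the $M_i$, eliminate $P$ via $H_P^{-1}$, and read off $(\lambda-1)^2=\mu$. The only difference is that you make explicit the non-degeneracy checks ($\lambda\neq 1$, $T\neq 0$, invertibility of $H_T$ and $H_P$) that the paper leaves implicit.
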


Similarly, the generalized eigenvalue problem for the block lower Gauss-Seidel preconditioner is
\begin{align}
  \label{eq:bgsgep}
\begin{bmatrix}
H_T & M_1 \\
M_2 & H_P
\end{bmatrix} 
\begin{bmatrix}
T\\
P
\end{bmatrix} &= \lambda \begin{bmatrix} H_T& 0\\
M_2 & H_P
\end{bmatrix}
\begin{bmatrix}
T\\
P
\end{bmatrix}.
\end{align}
This translates to the system of equations
\begin{align*} H_TT + M_1 P &= \lambda H_T T\\
  M_2T + H_P P &= \lambda M_2 T+ \lambda H_P P.
\end{align*}

Unlike the block-Jacobi case, $\lambda=1$ is a real possibility for
the block Gauss-Seidel case.  To see this, if $\lambda =1$,
the first equation with implies that
$P=0$, while the second is trivial.  Consequently,
any nonzero $\begin{bmatrix} 0 \\ T \end{bmatrix}$ is an eigenvector
corresponding to $\lambda = 1$.  Some basic algebra then gives
\begin{proposition}
  The generalized eigenvalues for~\eqref{eq:bgsgep} are $\lambda =1$
  with multiplicity $\dim \MCV_h$ and $1-\mu$ for any eigenvalue $\mu$
  of $H_T^{-1}M_1H_P^{-1} M_2$.
\end{proposition}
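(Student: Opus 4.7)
The plan is to split into two cases according to whether $\lambda = 1$ or not, mirroring the block-Jacobi argument but accounting for the fact that $\lambda = 1$ is now admissible because the preconditioner retains the $M_2$ block.

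First I would handle the $\lambda = 1$ case directly. Substituting $\lambda = 1$ into the system
\begin{align*}
H_T T + M_1 P &= \lambda H_T T,\\
M_2 T + H_P P &= \lambda M_2 T + \lambda H_P P
\end{align*}
makes the second equation vanish identically, while the first collapses to $M_1 P = 0$. Since $M_1$ is a nonzero scalar multiple of the mass matrix (see~\eqref{eq:Mi}) it is nonsingular, forcing $P = 0$. Any pair of the form $(T, 0)$ with $T \in \mathbb{C}^{\dim \MCV_h}$ is then a generalized eigenvector, which produces an eigenspace of dimension exactly $\dim \MCV_h$.

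Next I would treat the case $\lambda \neq 1$. The key observation is that the second equation can be rewritten as $(1 - \lambda)(M_2 T + H_P P) = 0$, so that $M_2 T + H_P P = 0$, and since $H_P$ is invertible (by~\eqref{eq:HPinv} and the G{\aa}rding-inequality argument for the pressure block), we may solve $P = -H_P^{-1} M_2 T$. Substituting into the first equation and rearranging yields
\begin{equation*}
(\lambda - 1) H_T T = -M_1 H_P^{-1} M_2 T,
\end{equation*}
and multiplying by $H_T^{-1}$ (invertible by~\eqref{eq:HTinv}) gives
\begin{equation*}
H_T^{-1} M_1 H_P^{-1} M_2 T = (1-\lambda) T.
\end{equation*}
Hence $\mu := 1 - \lambda$ is an eigenvalue of $H_T^{-1} M_1 H_P^{-1} M_2$ with eigenvector $T$, and conversely every eigenpair $(\mu, T)$ of that product yields a generalized eigenpair of the preconditioned system with $\lambda = 1 - \mu$ and $P = -H_P^{-1} M_2 T$.

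I do not expect any serious obstacle here: the proof is essentially a routine algebraic manipulation once the two cases are separated. The only subtlety worth flagging explicitly is the invertibility of $M_1$, $H_T$, and $H_P$, which I would justify by pointing to~\eqref{eq:Mi}, \eqref{eq:HTinv}, and~\eqref{eq:HPinv} respectively. Together the two cases exhaust all generalized eigenvalues and establish the stated description of the spectrum.
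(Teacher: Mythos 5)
Your proposal is correct and follows essentially the same route as the paper: the paper handles the $\lambda=1$ case by noting that $M_1P=0$ forces $P=0$ (so the eigenspace $\{(T,0)\}$ has dimension $\dim\MCV_h$) and then dispatches the $\lambda\neq 1$ case with the remark ``some basic algebra then gives,'' which is exactly the elimination $P=-H_P^{-1}M_2T$ and reduction to $H_T^{-1}M_1H_P^{-1}M_2T=(1-\lambda)T$ that you carry out explicitly. Your version is simply a more complete write-up of the same argument, including the converse direction and the invertibility justifications the paper leaves implicit.
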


From these two propositions, it is clear that understanding the
eigenvalues of the matrix $H_T^{-1}M_1H_P^{-1} M_2$ is critical in
evaluating either preconditioner.

Using~\eqref{eq:Mi}, we write $H_T^{-1} M_1 = \sigma_1 H_T^{-1} M$
with
$\sigma_1 = i\tfrac{\gamma-1}{\gamma}$ 
and
$H_P^{-1} M_2 = \sigma_2 H_P^{-1} M$ with $\sigma_2 = \gamma
\left(1-\tfrac{\Lambda}{\MCM}\right)$.  We can
bound the eigenvalues of $H_T^{-1} M H_P^{-1} M$ and scale the result
by the $|\sigma_1\sigma_2|$.

\begin{lemma}
  The eigenvalues $\lambda_T$ of $H_T^{-1}M$ and $\lambda_P$ of $H_P^{-1} M$ are bounded by
  \begin{equation}
    |\lambda_T| \leq C_T, \ \ \ |\lambda_P| \leq C_P.
  \end{equation}
\end{lemma}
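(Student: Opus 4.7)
The plan is to invoke the dictionary between matrices and operators set up in Section~\ref{sec:fea} and then quote the uniform bounds~\eqref{eq:HTinv} and~\eqref{eq:HPinv} that were already established for $\mcA_{T,h}^{-1}$ and $\mcA_{P,h}^{-1}$. The two statements are parallel, so I would argue the $\lambda_T$ bound in detail and then indicate that the $\lambda_P$ bound follows by the same reasoning with $P$ in place of $T$.

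First, recall from the discussion around~\eqref{eq:massandstiff} and~\eqref{eq:opnorm} that $M^{-1}A$ encodes the composition $\tau^{-1}\mcA_h$ and $A^{-1}M$ encodes its inverse $\mcA_h^{-1}\tau$ on $\MCV_h$. Applied to the $T$ block, this means $H_T^{-1}M$ is the matrix representation of $\mcA_{T,h}^{-1}\tau:\MCV_h\to\MCV_h$. Since the Riesz map $\tau$ is an isometric isomorphism between $\MCV_h$ (with the $L^2$ norm) and $\MCV_h^\prime$ (with its dual norm), composing with $\tau$ does not change the operator norm, so applying~\eqref{eq:opnorm} yields
\begin{equation*}
\|H_T^{-1}M\|_M \;=\; \|\mcA_{T,h}^{-1}\tau\| \;=\; \|\mcA_{T,h}^{-1}\|.
\end{equation*}

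Next, the bound~\eqref{eq:HTinv} gives $\|\mcA_{T,h}^{-1}\|\leq C_T$, and since the spectral radius of any matrix is dominated by any submultiplicative matrix norm, every eigenvalue $\lambda_T$ of $H_T^{-1}M$ satisfies $|\lambda_T|\leq \|H_T^{-1}M\|_M \leq C_T$. Repeating the same argument with $H_P$ and $\mcA_{P,h}$ in place of $H_T$ and $\mcA_{T,h}$, and invoking~\eqref{eq:HPinv} instead of~\eqref{eq:HTinv}, delivers $|\lambda_P|\leq C_P$.

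There is no substantive obstacle in this proof; all the real work has already been done in Section~\ref{sec:fea}, which establishes both the matrix/operator correspondence and the mesh-independent solvability bounds for the scalar Helmholtz-type blocks. The only point requiring a moment of care is distinguishing between $\mcA_{T,h}^{-1}$ (acting from $\MCV_h^\prime$ to $\MCV_h$) and $\mcA_{T,h}^{-1}\tau$ (acting on $\MCV_h$); this is resolved, as above, by the isometric property of the Riesz map, which is implicit in the identification $\|M^{-1}A\|_M=\|\mcA_h\|$ used throughout Section~\ref{sec:fea}.
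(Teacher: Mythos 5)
Your proposal is correct and follows essentially the same route as the paper's proof: the paper likewise bounds the spectral radius by the $M$-induced matrix norm and then invokes the identity $\|A^{-1}M\|_M = \|\mcA_h^{-1}\|$ from~\eqref{eq:l2invnorm} together with the bounds~\eqref{eq:HTinv} and~\eqref{eq:HPinv}. You simply spell out the Riesz-map bookkeeping that the paper leaves implicit.
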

\begin{proof}
  The spectral radius of a matrix is bounded by any
  natural matrix norm and hence that induced by the $M$-norm.
  Applying~\eqref{eq:l2invnorm} with the particular bounds
  given in~\eqref{eq:HTinv} and~\eqref{eq:HPinv} gives the desired result.
\end{proof}

\begin{proposition}
  The eigenvalues $\mu$ of $H_T^{-1}M_1H_P^{-1} M_2$ are bounded by
  \begin{equation}
    |\mu| \leq C_T C_P \left(\gamma-1 \right) \left( 1-
    \tfrac{\Lambda}{\MCM} \right).
  \end{equation}
\end{proposition}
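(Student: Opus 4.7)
The plan is to reduce the claim to the lemma just proved, via the submultiplicativity of the $M$-induced norm. First I would factor out the scalar constants: using the identifications in~\eqref{eq:Mi}, write
\begin{equation*}
H_T^{-1} M_1 H_P^{-1} M_2 = \sigma_1 \sigma_2 \, H_T^{-1} M H_P^{-1} M,
\end{equation*}
with $\sigma_1 = i(\gamma-1)/\gamma$ and $\sigma_2 = \gamma(1-\Lambda/\MCM)$, so that $|\sigma_1 \sigma_2| = (\gamma-1)(1-\Lambda/\MCM)$ (assuming the sign of $1-\Lambda/\MCM$ is positive, which is the regime of interest; otherwise absolute values should be taken throughout).

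Next I would pass from eigenvalues to norms. The spectral radius of $H_T^{-1} M_1 H_P^{-1} M_2$ is bounded above by any natural matrix norm, so it suffices to bound the $M$-norm. By submultiplicativity,
\begin{equation*}
\|H_T^{-1} M H_P^{-1} M\|_M \leq \|H_T^{-1} M\|_M \, \|H_P^{-1} M\|_M.
\end{equation*}
Each factor is controlled exactly as in the preceding lemma: by~\eqref{eq:opnorm} and~\eqref{eq:l2invnorm}, $\|H_T^{-1} M\|_M = \|\mcA_{T,h}^{-1}\|$ and $\|H_P^{-1} M\|_M = \|\mcA_{P,h}^{-1}\|$, and these are bounded by $C_T$ and $C_P$ from~\eqref{eq:HTinv} and~\eqref{eq:HPinv} respectively.

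Combining, any eigenvalue $\mu$ of $H_T^{-1} M_1 H_P^{-1} M_2$ satisfies
\begin{equation*}
|\mu| \leq |\sigma_1 \sigma_2| \cdot \|H_T^{-1} M H_P^{-1} M\|_M \leq C_T C_P (\gamma-1)\left(1-\tfrac{\Lambda}{\MCM}\right),
\end{equation*}
which is the claimed bound. The step I expect to require the most care is the transition from the individual eigenvalue bounds in the lemma to a bound on the product: one cannot simply multiply spectral radii, so the key observation is that the preceding lemma is actually established via the $M$-norm (which is submultiplicative), and the same estimate can be applied factor-by-factor to the product before invoking spectral radius $\leq$ norm on the composition. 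Everything else is bookkeeping on the scalar constants $\sigma_1, \sigma_2$.
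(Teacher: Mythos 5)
Your proof is correct and follows essentially the same route as the paper: bound the spectral radius by the submultiplicative $M$-norm, control each factor $\|H_T^{-1}M\|_M$ and $\|H_P^{-1}M\|_M$ via the preceding lemma's operator-norm bounds, and extract the scalars $\sigma_1,\sigma_2$. Your parenthetical about taking $|1-\Lambda/\MCM|$ is a fair observation (indeed $\Lambda/\MCM>1$ for the tabulated parameters), but it does not change the argument.
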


\begin{proof}
  We bound the spectral radius of $H_T^{-1}M_1H_P^{-1} M_2$ by its
  $M$-norm and use the previous lemmas and the values for $\sigma_1$
  and $\sigma_2$ so that
  \begin{equation*}
  \begin{split}
    |\mu| & \leq \|H_T^{-1}M_1H_P^{-1} M_2\|_M \\
    &\leq
  \| H_T^{-1}M_1 \|_M \|H_P^{-1} M_2 \|_M \\
  & \leq |\sigma_1|  \| H_T^{-1}M \|_M |\sigma_2| \|H_P^{-1} M \|_M \\
 &  \leq \left( \tfrac{\gamma-1}{\gamma} C_T \right)
  \left( \gamma \left(1-\tfrac{\Lambda}{\MCM}\right) C_P \right) \\
  & = C_T C_P \left(\gamma-1 \right) \left( 1-\tfrac{\Lambda}{\MCM}
  \right).
  \end{split}
  \end{equation*}
\end{proof}

We now have mesh-independent upper bounds of the eigenvalues of our
preconditioned linear systems.
\begin{theorem}
  \label{thm:j}
  The eigenvalues of~\eqref{eq:bjpgep} are contained in a ball of
  radius $\sqrt{C_T C_P \left(\gamma-1 \right) \left(
    1-\tfrac{\Lambda}{\MCM}\right)}$ around 1.
\end{theorem}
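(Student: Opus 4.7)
The plan is to combine the two previous results in essentially one line: the proposition characterizing generalized eigenvalues of~\eqref{eq:bjpgep} as $\lambda = 1 \pm \sqrt{\mu}$ for $\mu$ an eigenvalue of $H_T^{-1}M_1 H_P^{-1}M_2$, together with the modulus bound on $\mu$ just established.

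First I would fix an arbitrary generalized eigenvalue $\lambda$ of~\eqref{eq:bjpgep}. By the first proposition in this subsection, there exists an eigenvalue $\mu$ of $H_T^{-1}M_1 H_P^{-1}M_2$ with $(\lambda - 1)^2 = \mu$, so $|\lambda - 1|^2 = |\mu|$. Applying the preceding proposition that bounds $|\mu| \leq C_T C_P (\gamma-1)\left(1 - \tfrac{\Lambda}{\MCM}\right)$ and taking the square root then yields
\begin{equation*}
|\lambda - 1| \leq \sqrt{C_T C_P (\gamma - 1)\left(1 - \tfrac{\Lambda}{\MCM}\right)},
\end{equation*}
which is exactly the claim.

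There is no real obstacle here: both of the estimates needed are already in place, and in particular the mesh-independence of the bound is inherited directly from the mesh-independent constants $C_T$ and $C_P$ coming from~\eqref{eq:HTinv} and~\eqref{eq:HPinv}. The only minor point to note is that $\sqrt{\mu}$ is well-defined as a complex square root, so $1 \pm \sqrt{\mu}$ should be interpreted as the two complex roots of $(\lambda - 1)^2 = \mu$; regardless of which branch is chosen, the modulus $|\lambda - 1|$ equals $\sqrt{|\mu|}$, so the disk statement is unambiguous. Consequently the theorem follows as an immediate corollary of the material already developed, and no additional analytic ingredients (such as sharper spectral information on $H_T^{-1}M$ or $H_P^{-1}M$ beyond the $M$-norm bounds) are needed.
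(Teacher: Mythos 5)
Your proposal is correct and matches the paper's reasoning exactly: the theorem is stated as an immediate consequence of the proposition giving $\lambda = 1 \pm \sqrt{\mu}$ and the proposition bounding $|\mu| \leq C_T C_P(\gamma-1)\left(1-\tfrac{\Lambda}{\MCM}\right)$, so $|\lambda - 1| = \sqrt{|\mu|}$ yields the stated radius. Your remark about the branch of the complex square root being immaterial to the modulus is a fair clarification, but nothing beyond the two preceding propositions is needed.
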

\begin{theorem}
  \label{thm:gs}
  The generalized eigenvalue problem~\eqref{eq:bgsgep} has an
  eigenvalue 1 with multiplicity $\dim \MCV_h$, and
  another $\dim \MCV_h$ contained in a ball of radius
  $C_T C_P \left(\gamma-1 \right) \left(1-\tfrac{\Lambda}{\MCM}\right)$ around 1.
\end{theorem}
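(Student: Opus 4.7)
The plan is to assemble the statement directly from the two results immediately preceding it. The earlier Proposition on the generalized eigenvalue problem~\eqref{eq:bgsgep} has already done the structural work: it shows that every $\begin{bmatrix} 0 \\ T\end{bmatrix}$ with $T \in \MCV_h$ nonzero is an eigenvector for $\lambda = 1$, which accounts for a $\dim \MCV_h$-dimensional eigenspace, and that the remaining eigenvalues take the form $\lambda = 1 - \mu$ where $\mu$ ranges over eigenvalues of $H_T^{-1} M_1 H_P^{-1} M_2$. So the first $\dim \MCV_h$ eigenvalues in the theorem statement are handled verbatim.

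For the remaining eigenvalues, I would invoke the Proposition that bounds $|\mu| \leq C_T C_P (\gamma - 1)(1 - \tfrac{\Lambda}{\MCM})$. From $\lambda = 1 - \mu$ we get $|\lambda - 1| = |\mu|$, which is precisely the claim that these eigenvalues lie in a ball of the stated radius around $1$. The theorem then follows by combining these two observations.

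The only content beyond citation is noting that the bound on $|\mu|$ is mesh-independent because the constants $C_T$ and $C_P$ come from~\eqref{eq:HTinv} and~\eqref{eq:HPinv}, which were established for $h \leq h_0$ via the G{\aa}rding-inequality based convergence theory for the diagonal Helmholtz-type blocks. This is the same uniformity point already used to prove Theorem~\ref{thm:j} for block Jacobi, so no new analytical work is required. The main ``obstacle'' (already dispatched upstream) was ensuring that $H_T^{-1} M$ and $H_P^{-1} M$ have uniformly bounded $M$-norms; given that, the present theorem is essentially a two-line corollary.
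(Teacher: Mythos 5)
Your proposal is correct and matches the paper's own (implicit) argument: the theorem is stated as an immediate corollary of the two preceding propositions, exactly as you combine them, with $|\lambda - 1| = |\mu| \leq C_T C_P(\gamma-1)\left(1 - \tfrac{\Lambda}{\MCM}\right)$ giving the ball around $1$ and the $\lambda = 1$ eigenspace of dimension $\dim \MCV_h$ coming from the eigenvectors $\begin{bmatrix} 0 \\ T\end{bmatrix}$. Your remark on the mesh-independence of $C_T$ and $C_P$ via~\eqref{eq:HTinv} and~\eqref{eq:HPinv} is also consistent with the paper's reasoning.
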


Although the bounds on the block Jacobi preconditioner appear tighter,
the high multiplicity of the eigenvalue 1 for the block Gauss-Seidel
preconditioner is very powerful.  Our numerical results will indicate
that the Gauss-Seidel preconditioner is indeed preferable in practice.

While these upper bounds are in fact mesh-independent, they do not
preclude the ball around 1 containing the origin.  However, we can
rule out 0 both as an eigenvalue and an accumulation point of the
eigenvalues of the preconditioned system.  If we equip $\MCV_h$ with
the $H^1$ topology, $\Pi^{-1} A$ discretizes a bounded operator with
bounded inverse from $\MCV_h$ into itself.  Hence, the eigenvalues
must be bounded away from zero.  Although we could give a finer
estimate by adapting our recent discussion for $A^{-1} \Pi$ instead of
$\Pi^{-1} A$, it is sufficient to realize that the operator
$\mathcal{A}_h$ associated with the bilinear form $a$ has a uniformly
bounded inverse and that the preconditioning bilinear forms are both
bounded operators.

\section{Numerical results}
\label{sec:num}
Our numerical results focus on assessing the performance of the
preconditioners we have developed and analyzed above.  Our numerical
tests are performed using a development branch of Firedrake~\cite{Rathgeber:2016}
that uses complex arithmetic.  Our unstructured meshes of the tuning
fork exterior are generated using
\texttt{gmsh}~\cite{geuzaine2009gmsh}.  We have tested our techniques
in a three-dimensional setting as well, \footnote{Thanks to Artur
  Safin for providing the \texttt{gmsh} input for these cases.} with
similar results but slightly higher iteration counts for the
preconditioners.  We present only experiments for the parameter values
of the QEPAS configuration.  The ROTADE parameters lead to quite
similar results.

\begin{figure}
  \begin{centering}
    \include{tikzmesh}
  \end{centering}
  \caption{Coarsest two-dimensional mesh of the two-dimensional tuning
    fork exterior consisting of 695 triangles and 423 vertices.  For
    this domain, $\Gamma_a$ is the outer rectangle, and $\Gamma_w$ is
    the inner boundary of the tuning fork. }
  \label{fig:tfmesh}    
\end{figure}

\begin{figure}
  \centering
  \begin{subfigure}[t]{0.475\textwidth}
    \centering
    \includegraphics[width=0.9\textwidth]{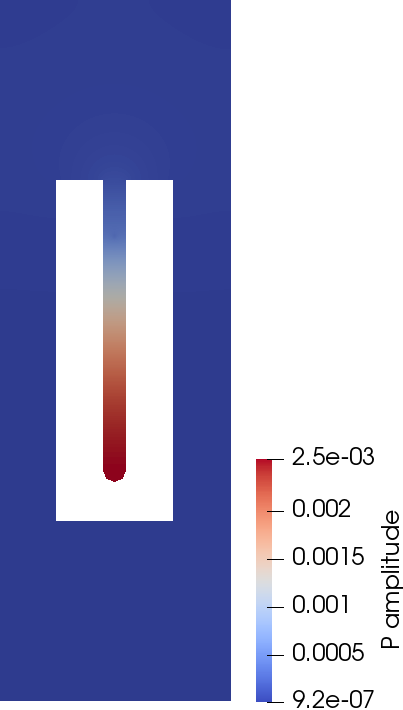}
    \caption{Pressure amplitude}
    \label{figP}
  \end{subfigure}
  \begin{subfigure}[t]{0.475\textwidth}
    \centering
    \includegraphics[width=0.9\textwidth]{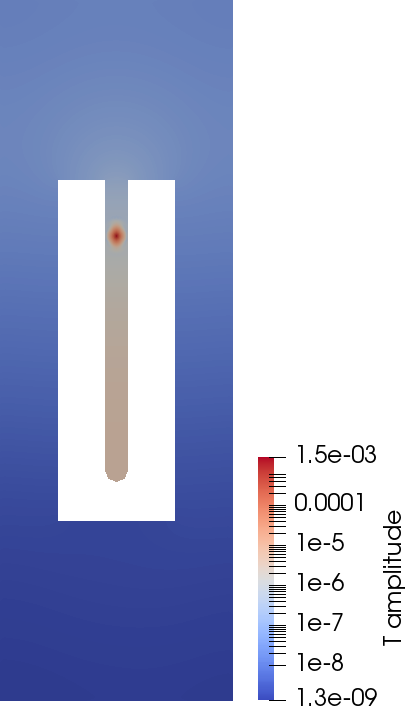}
    \caption{Temperature amplitude}
    \label{figT}
  \end{subfigure}
  \caption{Computed amplitude of pressure and temperature due to
    source term between the tuning fork tines.  Because of the rapid
    temperature decay away from the laser source, we have plotted the
    temperature amplitude on a log scale.}
  \label{fig:tfPT}
  \end{figure}

First, we numerically computed the eigenvalues of the original and
preconditioned systems on the coarse mesh shown in Figure~\ref{fig:tfmesh}.  This computation uses LAPACK
and is quite expensive so can only be performed on a very coarse mesh.
We plot the eigenvalues for the unpreconditioned system in
Figure~\ref{nopc} and those using both block preconditioners in
Figure~\ref{pc}.
The former plot illustrates two features.  First, it clearly
highlights the nature of operators being  coupled -- the temperature
satisfies a perturbation of the heat equation while the pressure a
perturbation of the wave equation.  These correspond to the two
(approximate) lines of eigenvalues extending along the imaginary and
real axis, respectively.  Due to the coupling, the eigenvalues are
slightly off of the respective lines.  Eigenvalues approaching zero
clearly indicate the need for preconditioning, although it appears
that preconditioning strategies that effectively capture the behavior
of the heat and wave equations should be highly effective for the
coupled problem.

The latter plot, showing the eigenvalues of the preconditioned systems, illustrates Theorems~\ref{thm:j}
and~\ref{thm:gs} above.  In practice, all the preconditioned
eigenvalues lie reasonably near one.   The clustering is a bit tighter
in practice for Gauss-Seidel than for Jacobi.  Moreover, fully half of
the eigenvalues using Gauss-Seidel are 1 to machine precision. We similar behavior for a very coarse three-dimensional mesh. 

\begin{figure}
  \centering
  \begin{subfigure}[t]{0.4\textwidth}
    \centering
      \begin{tikzpicture}[scale=0.7]
        \begin{axis}[
            xlabel={$\Re(\lambda)$},
            ylabel={$\Im(\lambda)$},
            title={Unpreconditioned Spectrum}
          ]
          \addplot[mark=+, only marks] table
                  [x=Real, y=Imag, col sep=comma] {nopc.eig.QEPAS.2d.695.csv};
        \end{axis}
      \end{tikzpicture}
      \caption{Eigenvalues of the unpreconditioned system, many of which come close to the origin.}
      \label{nopc}
  \end{subfigure}%
  \hspace{1.0in}
  \begin{subfigure}[t]{0.4\textwidth}
    \centering
      \begin{tikzpicture}[scale=0.7]
        \begin{axis}[
            xmin=0, xmax=2,
            xlabel={$\Re(\lambda)$},
            ylabel={$\Im(\lambda)$},
            title={Preconditioned Spectrum}
          ]
          \addplot[red, mark=o, only marks] table
                  [x=Real, y=Imag, col sep=comma]
                  {gs.eig.QEPAS.2d.695.csv};
                  \addlegendentry{GS}
          \addplot[blue, mark=x, only marks] table
                  [x=Real, y=Imag, col sep=comma]
                  {jac.eig.QEPAS.2d.695.csv};
                  \addlegendentry{Jacobi}
        \end{axis}
      \end{tikzpicture}
      \caption{Eigenvalues obtained with the block Gauss-Seidel (GS) and
        Jacobi (Jacobi) preconditioners, both of which cluster near 1 and away from the origin.}
  \label{pc}
  \end{subfigure}%
  \caption{Eigenvalues with and without preconditioning for the
    two-dimensional Morse-Ingard system on the mesh in
    Figure~\ref{fig:tfmesh} using QEPAS parameters.}
  \label{fig:pceig}
\end{figure}
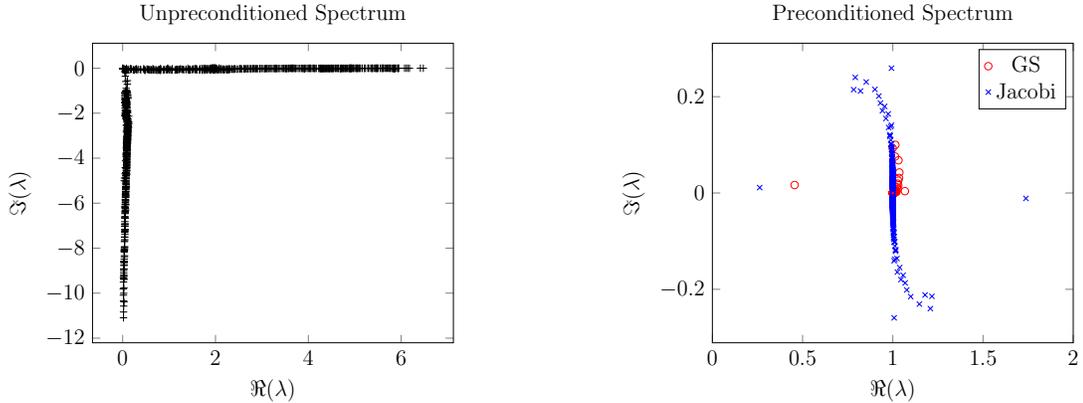

Based on the eigenvalue clustering in Figures~\ref{fig:pceig},
we expect
very low GMRES iteration counts using these
preconditioners.  We studied the scaling of various
solvers under a sequence of uniform mesh refinements.  The coarsest
two-dimensional mesh has but 423 vertices, while the finest has 
about 358k vertices.

PETSc~\cite{Balay:2016, Balay:1997} provides Firedrake's numerical
linear algebra capability.  For testing the block preconditioners, we
assemble the system stiffness matrix into a 
\texttt{MatNest} segregating rows and columns corresponding to
pressure and temperature and then make use of \texttt{FieldSplit}
~\cite{Brown:2012} with a Krylov method.  In this way, switching between
Gauss-Seidel and block Jacobi preconditioners amounts to selecting
either a \texttt{multiplicative} or \texttt{additive} option for the
preconditioner.

To establish a baseline for the iteration count for the two block preconditioners, we first consider inverting those blocks with a sparse LU factorization.
So, we use GMRES for the outer solver with a relative tolerance of
$10^{-8}$ on the outer solves, using the default PETSc LU
factorization for inverting the diagonal blocks.  These results correspond to the labels `GS' and `Jac' in Figure~\ref{fig:itsandtime}.

It is also possible to invert these diagonal blocks by a
preconditioned iterative method, using flexible
GMRES~\cite{saad1993flexible} instead of standard GMRES for the outer
iteration.  Rather than using the resulting nested iteration, we make a
further approximation by replacing the inner solve with a
preconditioner.  We use \texttt{gamg}~\cite{adams2004algebraic}
algebraic multigrid with options 
\texttt{-pc\_mg\_type full -mg\_levels\_pc\_type sor -mg\_levels\_ksp\_type
  gmres -mg\_levels\_ksp\_max\_it 50} within the appropriate prefix for
each diagonal block.
These results appear under labels `GS/GAMG' and `Jac/GAMG' in figure~\ref{fig:itsandtime}.
We remark that using a Krylov method on subgrids
for Helmholtz was first proposed in~\cite{elman2001multigrid}.

We see an essentially mesh-independent iteration count in Figure~\ref{it2d} for
our all of our block preconditioners, although the block Gauss-Seidel
preconditioner takes roughly half as many outer iterations as the
block Jacobi one.  Somewhat surprisingly, replacing the inner solve
with an algebraic multigrid preconditioner seems to \emph{reduce} the
outer iteration count slightly for block Jacobi.

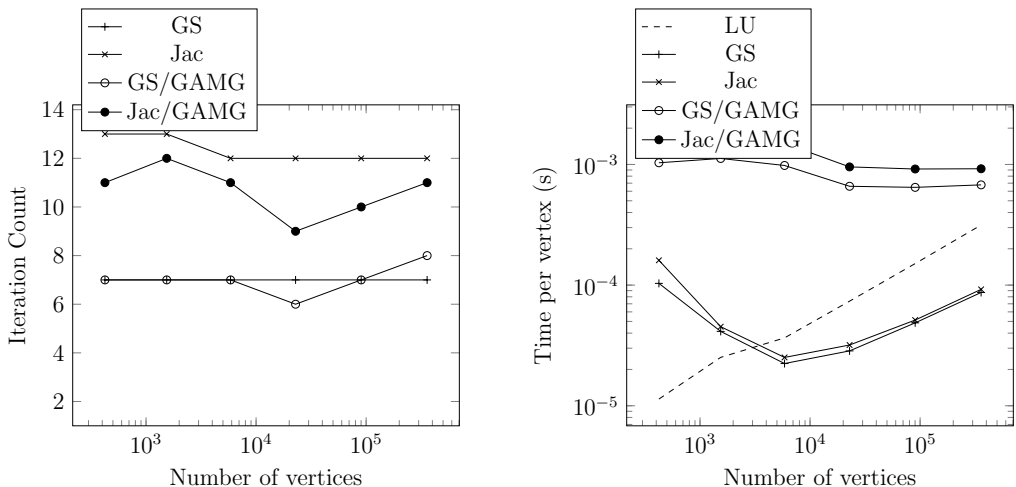
\begin{figure}
  \centering
  \begin{subfigure}[t]{0.49\textwidth}
    \begin{tikzpicture}[scale=0.75]
      \begin{axis}[xmode=log, ymin =1, xlabel={Number of vertices},
          ylabel={Iteration Count}, legend style={at={(0.25,1.3)}, anchor=north}
          ]
        \addplot[mark=+] table
                [x=N, y=Its, col sep=comma]{pc.Safin.multlu.csv};
        \addlegendentry{GS}
        \addplot[mark=x] table
                [x=N, y=Its, col sep=comma]
                {pc.Safin.addlu.csv};
        \addlegendentry{Jac}
        \addplot[mark=o] table
                [x=N, y=Its, col sep=comma]{pc.Safin.multgamg.csv};
        \addlegendentry{GS/GAMG}
        \addplot[mark=*] table
                [x=N, y=Its, col sep=comma]{pc.Safin.addgamg.csv};
        \addlegendentry{Jac/GAMG}
      \end{axis}
    \end{tikzpicture}
    \caption{Iteration counts for various block preconditioning strategies.}
    \label{it2d}
  \end{subfigure}
  \begin{subfigure}[t]{0.49\textwidth}
    \centering
    \begin{tikzpicture}[scale=0.75]
      \begin{axis}[xmode=log, ymode=log, xlabel={Number of vertices},
          ylabel={Time per vertex (s)},
          legend style={at={(0.25,1.3)}, anchor=north}]
        \addplot[dashed] table
                [x=N, col sep=comma, y expr=\thisrowno{1}/\thisrowno{0}]{pc.Safin.lu.csv};
                \addlegendentry{LU}
        \addplot[mark=+] table
                [x=N, col sep=comma, y expr=\thisrowno{1}/\thisrowno{0}]{pc.Safin.multlu.csv};
                \addlegendentry{GS}
        \addplot[mark=x] table
                [x=N, col sep=comma, y expr=\thisrowno{1}/\thisrowno{0}]{pc.Safin.addlu.csv};
                \addlegendentry{Jac}
        \addplot[mark=o] table
                [x=N, col sep=comma, y expr=\thisrowno{1}/\thisrowno{0}]{pc.Safin.multgamg.csv};
                \addlegendentry{GS/GAMG}
        \addplot[mark=*] table
                [x=N, col sep=comma, y expr=\thisrowno{1}/\thisrowno{0}]{pc.Safin.addgamg.csv};
                \addlegendentry{Jac/GAMG}      
      \end{axis}
    \end{tikzpicture}
    \caption{Solver timings for
      various block preconditioning strategies.}
    \label{time2d}
  \end{subfigure}
\caption{Performance of block preconditioners for the
  two-dimensional Morse-Ingard equations.
  Preconditioners `GS' and `Jac' correspond
  to inverting the diagonal blocks $H_T$ and $H_P$ with
  LU factorization, and `GS/GAMG' and `Jac/GAMG'
  correspond to replacing the inversion of $H_T$ and $H_P$ with a
  single sweep of \texttt{gamg}.  A sparse LU factorization is included for comparison.}
\label{fig:itsandtime}
\end{figure}

We also measure the actual run-time of our various solver
options.  We report the solver time (including preconditioner
setup but not stiffness matrix assembly)
in Figure~\ref{time2d}.  We report the time normalized by the number
of vertices, which is half the total number of unknowns, versus the
total number of vertices.  In this metric, a flat curve corresponds to
exact linear scaling.  We see an uptick for the block Jacobi and
Gauss-Seidel preconditioners using LU factorization for the blocks
(this is expected from the superlinear complexity of Gaussian
elimination).  This trend will continue under mesh refinement until factoring
the sub-blocks becomes uncompetitive, but we expect the flat trend to
continue for AMG.

An important point regarding the reformulated system appears in
Figure~\ref{fig:itsandtimeBK},
where we apply the same kinds of block
preconditioners to the form of the Morse-Ingard equations considered
in~\cite{brennan2015finite}.  When the diagonal blocks are inverted
with sparse direct factorization, we see flat iteration counts for
both preconditioners.  The block Gauss-Seidel preconditioner requires 1-2
more iterations than for the form we consider in this paper, and the
block Jacobi preconditioner requires 3-5 more iterations.  However,
apparently the subblocks for the formulation
in~\cite{brennan2015finite} are less amenable to preconditioning than
those we consider here.  Replacing the inversion of diagonal blocks
with an application of \texttt{gamg} usually leads to somewhat higher
(about a factor of two) iteration counts and run-times, but on certain
meshes the iteration count either spiked wildly or GMRES failed to
converge altogether.  Apparently, the manipulation leading
from~\eqref{eq:dropstar} to \eqref{eq:PDE} reduces the effective wave
number or otherwise improves the conditioning of the diagonal blocks.

\begin{figure}
  \centering
  \begin{subfigure}[t]{0.49\textwidth}
    \begin{tikzpicture}[scale=0.75]
      \begin{axis}[xmode=log, ymode=log, ymin =1, xlabel={Number of vertices},
          ylabel={Iteration Count}, legend style={at={(0.25,1.3)}, anchor=north}
          ]
        \addplot[mark=+] table
                [x=N, y=Its, col sep=comma]{pc.BrennanKirby.multlu.csv};
        \addlegendentry{GS}
        \addplot[mark=x] table
                [x=N, y=Its, col sep=comma]
                {pc.BrennanKirby.addlu.csv};
        \addlegendentry{Jac}
        \addplot[mark=o] table
                [x=N, y=Its, col sep=comma]{pc.BrennanKirby.multgamg.csv};
        \addlegendentry{GS/GAMG}
        \addplot[mark=*] table
                [x=N, y=Its, col sep=comma]{pc.BrennanKirby.addgamg.csv};
        \addlegendentry{Jac/GAMG}
      \end{axis}
    \end{tikzpicture}
    \caption{Iteration counts for various block preconditioning strategies.}
    \label{it2dBK}
  \end{subfigure}
  \begin{subfigure}[t]{0.49\textwidth}
    \centering
    \begin{tikzpicture}[scale=0.75]
      \begin{axis}[xmode=log, ymode=log, xlabel={Number of vertices},
          ylabel={Time per vertex (s)},
          legend style={at={(0.25,1.3)}, anchor=north}]
        \addplot[mark=+] table
                [x=N, col sep=comma, y expr=\thisrowno{1}/\thisrowno{0}]{pc.BrennanKirby.multlu.csv};
                \addlegendentry{GS}
        \addplot[mark=x] table
                [x=N, col sep=comma, y expr=\thisrowno{1}/\thisrowno{0}]{pc.BrennanKirby.addlu.csv};
                \addlegendentry{Jac}
        \addplot[mark=o] table
                [x=N, col sep=comma, y expr=\thisrowno{1}/\thisrowno{0}]{pc.BrennanKirby.multgamg.csv};
                \addlegendentry{GS/GAMG}
        \addplot[mark=*] table
                [x=N, col sep=comma, y expr=\thisrowno{1}/\thisrowno{0}]{pc.BrennanKirby.addgamg.csv};
                \addlegendentry{Jac/GAMG}      
      \end{axis}
    \end{tikzpicture}
    \caption{Solver timings for
      various block preconditioning strategies.}
    \label{time2dBK}
  \end{subfigure}
\caption{Performance of various block preconditioners for the
  two-dimensional Morse-Ingard equations  as formulated
  in~\cite{brennan2015finite}.  Preconditioners `GS' and `Jac' correspond
  to inverting the diagonal blocks with LU factorization,
  while `GS/GAMG' and `Jac/GAMG' correspond to replacing each inversion
  of $H_T$ and $H_P$ with a single sweep of \texttt{gamg}.}
\label{fig:itsandtimeBK}
\end{figure}
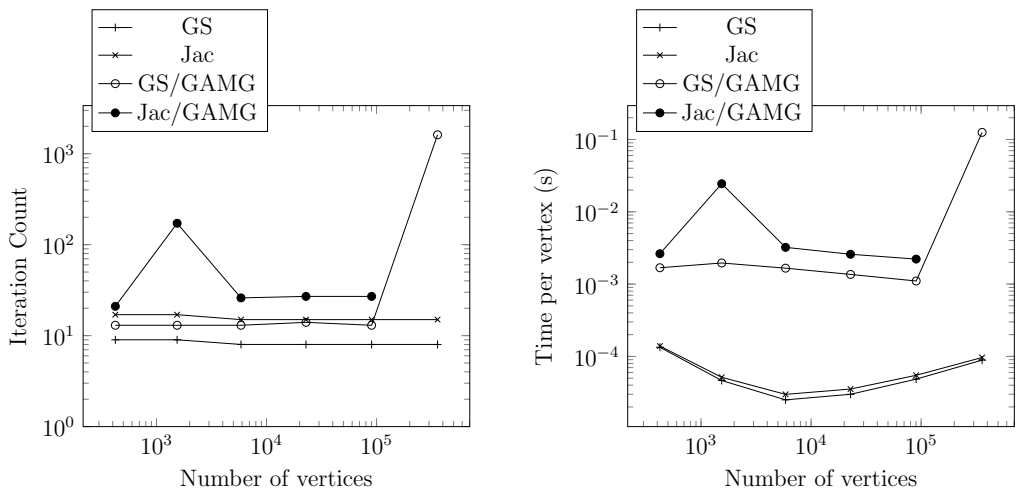

Seeing that our block preconditioners with inner solves approximated
by \texttt{gamg} are very favorable, we have tested these
configurations on a three-dimensional version of this problem.  
Sparse direct solvers are far less competitive in three dimensions, so
we have not considered these options.

Our coarsest mesh contains 209 vertices, the finest about
62,000.  The iteration counts are slightly higher than for two
dimensions and seems to trend upward slightly under mesh refinement.
Iteration counts and timings are shown in Figure~\ref{fig:itsandtime3d}.

\begin{figure}
  \centering
  \begin{subfigure}[t]{0.49\textwidth}
    \begin{tikzpicture}[scale=0.75]
      \begin{axis}[xmode=log, ymin =1, xlabel={Number of vertices},
          ylabel={Iteration Count}, legend style={at={(0.25,1.3)}, anchor=north}
          ]
        \addplot[mark=o] table
                [x=Verts, y=Its, col sep=comma]{solving.QEPAS.multgamg.3d.csv};
        \addlegendentry{GS/GAMG}
        \addplot[mark=*] table
                [x=Verts, y=Its, col sep=comma]{solving.QEPAS.addgamg.3d.csv};
        \addlegendentry{Jac/GAMG}
      \end{axis}
    \end{tikzpicture}
    \caption{Iteration counts for various block preconditioning strategies.}
    \label{it3d}
  \end{subfigure}
  \begin{subfigure}[t]{0.49\textwidth}
    \centering
    \begin{tikzpicture}[scale=0.75]
      \begin{axis}[xmode=log, ymode=log, ymin=0.001, ymax=0.01, xlabel={Number of vertices},
          ylabel={Time per vertex (s)},
          legend style={at={(0.25,1.3)}, anchor=north}]
        \addplot[mark=o] table
                [x=Verts, col sep=comma, y expr=\thisrowno{2}/\thisrowno{0}]{solving.QEPAS.multgamg.3d.csv};
        \addlegendentry{GS/GAMG}
        \addplot[mark=*] table
                [x=Verts, col sep=comma, y expr=\thisrowno{2}/\thisrowno{0}]{solving.QEPAS.addgamg.3d.csv};
        \addlegendentry{Jac/GAMG}      
      \end{axis}
    \end{tikzpicture}
    \caption{Solver timings (not including matrix assembly) for
      various block preconditioning strategies.}
    \label{time3d}
  \end{subfigure} 
\caption{Performance of block Gauss-Seidel and Jacobi preconditioners
  for the three-dimensional Morse-Ingard equations.  In both cases,
  inverses of the diagonal blocks $H_T$ and $H_P$ are approximated
  with a single application of \texttt{gamg}.  Iteration counts are on
  the left, and timings are on the right.}
\label{fig:itsandtime3d}
\end{figure}
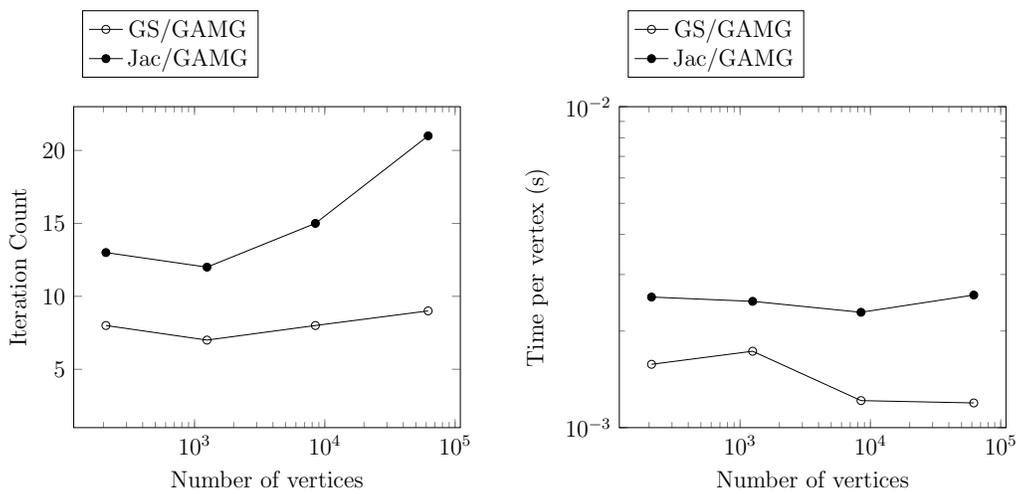

\section{Conclusions}
We have developed finite element theory for a particular form of the
Morse-Ingard equations for thermoacoustic systems.  Although not
coercive, this form has a looser coupling between pressure and
temperature than the form considered in~\cite{brennan2015finite}.  Discrete solvability and error estimates follow from an
adaptation of G{\aa}rding's inequality.  We also study
highly-effective block preconditioners for linear systems.  In
addition to satisfying rigorous eigenvalue bounds, the preconditioners
deliver highly practical iteration counts.

In the future, we hope to study more advanced physical
configurations that include coupling of the pressure/temperature
equations both to thermal and vibrational effects in the tuning fork
and to the equations of fluid motion.  This will be a highly
nontrivial multiphysics setting that requires effective solvers.



\bibliographystyle{elsarticle-num} 
\bibliography{myBib}





\end{document}